\newtheorem{theorem}{Theorem}
\newtheorem{corollary}[theorem]{Corollary}
\newtheorem{lemma}[theorem]{Lemma}
\newtheorem{prop}[theorem]{Propostion}
\newcommand{\Vol}{\operatorname{Vol}}
\newcommand{\orth}{\operatorname{L}}
\title[Lower bounds for volumes and orthospectra]{Lower bounds for volumes and orthospectra of hyperbolic manifolds with geodesic boundary}
\author{Mikhail Belolipetsky}
\thanks{Belolipetsky is partially supported by CNPq, FAPERJ and MPIM in Bonn.} 
\address{IMPA, Estrada Dona Castorina, 110, 22460-320 Rio de Janeiro, Brazil}
\email{mbel@impa.br}
\author{Martin Bridgeman}
\thanks{Bridgeman is supported by NSF grant DMS-2005498 and by a grant from the Simons Foundation (675497, MJB)}
\address{Boston College, Chestnut Hill, MA 02467, USA}
\email{bridgem@bc.edu}
\begin{document}
\begin{abstract}
In this paper we derive explicit estimates for the functions which appear in the previous work of Bridgeman and Kahn. As a consequence, we obtain an explicit lower bound for the length of the shortest orthogeodesic in terms of the volume of a hyperbolic manifold with totally geodesic boundary. We also give an alternative derivation of a lower bound for the volumes of these  manifolds as a function of the dimension. 
\end{abstract}

\maketitle

 \section{Introduction}
Let $M$ be a compact hyperbolic $n$-dimensional manifold with non-empty totally geodesic boundary. An {\em orthogeodesic} of $M$ is a geodesic arc  with endpoints in $\partial M$ which are perpendicular to $\partial M$ at the endpoints. The {\em orthospectrum}  $\Lambda_M$  of $M$ is the set (with multiplicities) of lengths of orthogeodesics. As the orthogeodesics of $M$ correspond to a subset of the closed geodesics of its double, the set of orthogeodesics of $M$ is countable. We let $\Vol(M)$ and $\Vol(\partial M)$ be the volumes of the hyperbolic manifolds $M$ and $\partial M$. We further let $\orth(M)$ be the length of the shortest orthogeodesic of $M$. In this paper we will explore the relation between the three quantities $\Vol(M), \Vol(\partial M)$ and $\orth(M)$.

The orthospectrum was first introduced by Basmajian in the 1993 paper ``The orthogonal spectrum of a hyperbolic manifold'' (see  \cite{Bas93}). In the paper Basmajian showed that a totally geodesic hypersurface $S$ in a hyperbolic manifold can be decomposed into embedded disks which are in one-to-one correspondence with the orthogeodesics of the manifold $M$ obtained by cutting  along the hypersurface $S$. Then, by describing the radii of the disks in terms of the length of the corresponding orthogeodesics, Basmajian obtained the following orthospectrum identity
$$ \Vol(S) = \sum_{l \in \Lambda_{M}} V_{n-1}\left(\log\left(\coth{\frac{l}{2}}\right)\right),$$
where  $V_n(r)$ is the volume of a hyperbolic ball of radius $r$ in ${\bf H}^n$.

Using a decomposition of the tangent bundle via orthogeodesics, the second author and Kahn proved the following.

\begin{theorem}{(Bridgeman--Kahn, \cite{BK10})}
Given $n \geq 2$ there exists a continuous monotonically decreasing function $F_n:{\bf R}_+ \rightarrow {\bf R}_+$ such that if $M$ is a compact hyperbolic $n$-manifold with non-empty totally geodesic boundary, then
$$\Vol(M) = \sum_{l \in \Lambda_M} F_n(l).$$
\label{BKid}
\end{theorem}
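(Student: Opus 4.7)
The plan is to decompose the unit tangent bundle $T^1 M$ into pieces indexed by the orthogeodesics and show that each piece has volume depending only on the length of its orthogeodesic.

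First, I would observe that because $M$ is compact with nonempty totally geodesic boundary, the geodesic flow on $T^1 M$ is not complete: for almost every $v \in T^1 M$ the geodesic $\gamma_v$ hits $\partial M$ in both forward and backward time (the set of trapped directions projects to the non-wandering set, which lies in a finite union of closed geodesics and hence has measure zero in $T^1 M$). For such a generic $v$, the maximal arc of $\gamma_v$ inside $M$ is a geodesic arc $\beta_v$ whose endpoints lie on $\partial M$. Since $\partial M$ is totally geodesic and $M$ is hyperbolic, $\beta_v$ is properly homotopic (keeping endpoints on $\partial M$) to a unique orthogeodesic $\alpha(v) \in \Lambda_M$. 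This gives a measurable partition
\[
T^1 M = \bigsqcup_{\alpha \in \Lambda_M} U_\alpha \quad (\text{mod measure zero}), \qquad U_\alpha := \{ v : \alpha(v) = \alpha\}.
\]
Since $\Vol(T^1 M) = \Omega_{n-1}\,\Vol(M)$, where $\Omega_{n-1}$ is the volume of the unit sphere in $\R^n$, we get $\Vol(M) = \Omega_{n-1}^{-1}\sum_\alpha \Vol(U_\alpha)$.

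Next, I would compute $\Vol(U_\alpha)$ by lifting to the universal cover $\widetilde{M} \subset {\bf H}^n$. Fix a lift $\widetilde{\alpha}$ of the orthogeodesic $\alpha$; it has endpoints on two hyperplanes $H_1, H_2$ that are lifts of components of $\partial M$, and these hyperplanes are disjoint with common perpendicular of length $\ell(\alpha)$. The set $U_\alpha$ lifts (injectively, away from measure zero) to the set
\[
\widetilde{U}(H_1,H_2) = \bigl\{ v \in T^1 {\bf H}^n : \gamma_v \text{ meets both } H_1 \text{ and } H_2 \bigr\}.
\]
Because any two such ordered pairs of hyperplanes at distance $\ell$ are equivalent under $\operatorname{Isom}({\bf H}^n)$, the volume of $\widetilde{U}(H_1,H_2)$ depends only on $\ell$. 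Setting
\[
F_n(\ell) := \frac{1}{\Omega_{n-1}}\,\Vol\bigl(\widetilde{U}(H_1,H_2)\bigr),
\]
we immediately obtain $\Vol(M) = \sum_{l \in \Lambda_M} F_n(l)$.

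The remaining task is to verify that $F_n$ is well-defined (finite), continuous, and monotonically decreasing. For an explicit handle I would coordinatize a slab between two parallel hyperplanes using (i) a foot on $H_1$, (ii) the signed distance along the common perpendicular direction, and (iii) the direction of $v$ in the orthogonal tangent space, then integrate with respect to hyperbolic volume and the round measure on the sphere; the condition ``$\gamma_v$ crosses both $H_i$'' translates into the direction lying in a spherical cap whose radius depends on the position and on $\ell$. Finiteness of $F_n(\ell)$ for $\ell > 0$ follows because vectors nearly parallel to $H_1$ escape to infinity along $H_1$ and do not meet $H_2$, and the volume of admissible directions decays fast enough. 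Monotonicity is geometrically evident: enlarging $\ell$ shrinks the admissible cone of directions and shrinks the slab's effective cross-section; continuity follows from the continuous dependence of these caps on $\ell$.

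The main technical obstacle is rigorously justifying step one — that the decomposition of $T^1 M$ into the $U_\alpha$ is measurable and exhausts $T^1 M$ up to a null set — together with checking that the lift $U_\alpha \to \widetilde{U}(H_1,H_2)$ is a measure-preserving bijection almost everywhere (no double-counting from different lifts of $\alpha$ or from geodesic segments that wrap around before exiting). Once this bookkeeping is settled, the finiteness and monotonicity of $F_n$ are a direct integral computation in the slab model described above.
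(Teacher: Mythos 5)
The paper does not prove this theorem; it quotes it from \cite{BK10}. Your outline follows exactly the Bridgeman--Kahn strategy (partition $T^1M$ by the proper homotopy class of the maximal geodesic arc through each vector, lift each piece to ${\bf H}^n$, and invoke homogeneity), but two of the justifications you give are incorrect as stated, and the second one is precisely the point on which the identity lives or dies.

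First, the full-measure claim. You assert that the set of trapped vectors ``projects to the non-wandering set, which lies in a finite union of closed geodesics.'' That is false: for a compact hyperbolic manifold with nonempty totally geodesic boundary, the non-wandering set of the geodesic flow consists of all geodesics with both endpoints in the limit set $\Lambda(\Gamma)$ of the convex cocompact group $\Gamma=\pi_1(M)$, and this is in general an uncountable Cantor-like set containing uncountably many non-closed complete geodesics. The correct reason the trapped set is null is that $\Lambda(\Gamma)$ has Lebesgue measure zero in $\partial{\bf H}^n$ (its Hausdorff dimension is strictly less than $n-1$ since $\Gamma$ is convex cocompact of infinite covolume), so the set of $v$ whose forward or backward endpoint lies in $\Lambda(\Gamma)$ has measure zero in $T^1M$.

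Second, the lifted set. As you define $\widetilde U(H_1,H_2)$ --- all $v\in T^1{\bf H}^n$ whose geodesic meets both hyperplanes --- it is strictly too large: it contains a positive-volume set of vectors based outside the segment between the two crossing points (indeed outside the slab), and summing these volumes over all orthogeodesics would overshoot $\Vol(T^1M)$. The correct set consists of the vectors based on the open subsegment of $\gamma_v$ between its intersections with $H_1$ and $H_2$, equivalently with backward endpoint in the open disk at infinity cut off by $H_1$ and forward endpoint in the disk cut off by $H_2$. The bookkeeping you defer to the end is then closed by one convexity observation, which is the real content of the proof: that subsegment automatically lies in $\widetilde M$, because its endpoints lie on $H_1$ and $H_2$, hence in the closed half-space complementary to every other wall $H_3$, and closed half-spaces are convex, so no third wall can meet the segment. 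Together with the fact that the disks at infinity of distinct walls are pairwise disjoint (so each generic vector determines a unique ordered pair of exit walls) and that a deck transformation preserving an ordered pair of walls fixes the feet of their common perpendicular and hence is the identity, this shows the lifted sets are exactly the $U_\alpha$, with no over- or under-counting. I would also not call the monotonicity of $F_n$ ``geometrically evident'': pushing the planes apart shrinks the disks at infinity but lengthens the chords between them, and in \cite{BK10} monotonicity is extracted from the explicit integral formula rather than from a containment of sets.
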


The function $F_n$ is given by an integral formula, see equation \eqref{Fn} below.  The above theorem was generalized to non-compact finite volume hyperbolic manifolds with totally geodesic boundary by Vlamis and Yarmola (see \cite{VY}).

An analysis of the  asymptotic behaviour of $F_n(l)$  as $l \rightarrow 0$ gives

\begin{theorem}{(Bridgeman--Kahn, \cite{BK10})}
For $n \geq 3$, there exists a monotonically increasing function $H_n:{\bf R}_+ \rightarrow{\bf R}_+$ and a constant $C_n > 0$ such that if $M$ is a compact hyperbolic $n$-manifold with totally geodesic boundary with $\Vol(\partial M) = A$, then 
$$\Vol(M) \geq H_n(A) \geq C_n \cdot A^{\frac{n-2}{n-1}}.$$
\label{vol}
\end{theorem}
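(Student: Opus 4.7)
The plan is to combine the Bridgeman--Kahn identity (Theorem~\ref{BKid}) with Basmajian's identity via a pointwise comparison, then apply a subadditivity argument. Write $\phi_n(l) := V_{n-1}(\log\coth(l/2))$ for the Basmajian weight. First, I would apply Basmajian's identity to the totally geodesic hypersurface $\partial M$ sitting inside the double $DM$: since cutting $DM$ along $\partial M$ produces two copies of $M$ and each orthogeodesic of $M$ is thereby counted twice,
\[
\Vol(\partial M) \;=\; 2 \sum_{l \in \Lambda_M} \phi_n(l).
\]

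The analytic core of the argument is the pointwise inequality
\[
F_n(l) \;\geq\; c_n\, \phi_n(l)^{(n-2)/(n-1)} \qquad \text{for all } l>0,
\]
for some explicit positive constant $c_n$. I would prove this by analyzing the integral formula~\eqref{Fn} for $F_n$. As $l\to 0$ one has $\phi_n(l)\sim C\,l^{-(n-2)}$, so the right-hand side is of order $l^{-(n-2)^2/(n-1)}$, and the matching lower bound on $F_n(l)$ is extracted by isolating the singular part of its integrand; for moderate and large $l$ the inequality is verified by continuity and positivity of both sides. The specific exponent $(n-2)/(n-1)$ is essentially forced by matching these small-$l$ asymptotics.

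Once the pointwise comparison is in hand, the rest of the argument is formal. Since $x\mapsto x^{(n-2)/(n-1)}$ is concave on $[0,\infty)$ with value $0$ at the origin, it is subadditive, so
\[
\Vol(M) \;=\; \sum_{l \in \Lambda_M} F_n(l) \;\geq\; c_n \sum_{l \in \Lambda_M} \phi_n(l)^{(n-2)/(n-1)} \;\geq\; c_n \Big(\sum_{l \in \Lambda_M} \phi_n(l)\Big)^{(n-2)/(n-1)} = c_n \Big(\tfrac{\Vol(\partial M)}{2}\Big)^{(n-2)/(n-1)},
\]
giving the stated bound with $C_n := c_n\, 2^{-(n-2)/(n-1)}$. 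One may take $H_n(A)$ to be the infimum of $\sum F_n(l_i)$ over configurations $(l_i)$ with $\sum \phi_n(l_i) = A/2$; this is monotonically increasing in $A$ and dominates $C_n A^{(n-2)/(n-1)}$ by the computation above.

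The principal obstacle is the pointwise inequality: all subsequent manipulations are formal, but uniformly matching the singular behavior of $F_n(l)$ as $l \to 0$ against $\phi_n(l)^{(n-2)/(n-1)}$, while keeping $c_n$ positive across the entire range $l>0$, requires careful control of the integral defining $F_n$.
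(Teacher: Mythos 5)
Your route is genuinely different from the paper's, but it rests on a pointwise inequality that is false: there is no positive constant $c_n$ with $F_n(l)\geq c_n\,\phi_n(l)^{(n-2)/(n-1)}$ for \emph{all} $l>0$. As $l\to\infty$ both $F_n(l)$ and $\phi_n(l)$ decay like $e^{-(n-1)l}$ up to polynomial factors (for $F_n$ this follows from $\lim_{b\to\infty}\frac{b^{n-1}}{\log b}M_n(b)=\frac{4}{n-1}$ in Lemma~\ref{Mn}; for $\phi_n$ from $\log\coth(l/2)\sim 2e^{-l}$ and $V_{n-1}(r)\sim c\,r^{n-1}$ as $r\to0$), so $\phi_n(l)^{(n-2)/(n-1)}$ decays only like $e^{-(n-2)l}$ and eventually dominates $F_n(l)$. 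In dimension $3$ this is completely explicit: $F_3(l)=2\pi(l+1)/(e^{2l}-1)$ and $\phi_3(l)=V_2\bigl(\log\coth(l/2)\bigr)=2\pi(\coth l-1)=4\pi/(e^{2l}-1)$, whence
$$\frac{F_3(l)}{\phi_3(l)^{1/2}}=\frac{\sqrt{\pi}\,(l+1)}{\sqrt{e^{2l}-1}}\longrightarrow 0\qquad(l\to\infty).$$
Your small-$l$ matching correctly explains where the exponent $(n-2)/(n-1)$ comes from, and the subadditivity step is fine, but ``continuity and positivity'' cannot produce a uniform constant on the noncompact range $(0,\infty)$ when the ratio tends to $0$ at infinity. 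The same degeneration breaks your proposed $H_n$: a configuration with a single very long orthogeodesic makes $\sum F_n(l_i)$ much smaller than $\bigl(\sum\phi_n(l_i)\bigr)^{(n-2)/(n-1)}$, so the claimed lower bound on that infimum does not follow.

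The actual argument (in \cite{BK10} and in the proof of Theorem~\ref{thm2} here) never compares $F_n$ to the Basmajian summand term by term. It keeps only the shortest orthogeodesic $L=\orth(M)$: one has $\Vol(M)\geq F_n(L)$ by discarding all other terms of the identity, and independently $\Vol(M)\geq \Vol(\partial M)\,S_n(L/2)\geq \Vol(\partial M)\,L/2$ from the embedded collar of radius $L/2$ about the geodesic boundary. Since $F_n$ is decreasing and the collar bound is increasing in $L$, $\Vol(M)$ is bounded below by their common value at the crossing point, and the explicit small-$l$ bound $F_n(l)\geq K_n(e^l-1)^{-(n-2)}$ of Lemma~\ref{Fb} converts that crossing value into $C_n A^{(n-2)/(n-1)}$; the long-orthogeodesic regime is covered by the linear collar bound, which is exactly why the explicit statement appears as a dichotomy. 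If you wish to keep your orthospectral-comparison framework, you would likewise need to split the orthospectrum into short and long parts, using on the long part the linear comparison $F_n(l)\gtrsim\phi_n(l)$ (which \emph{is} uniformly valid, since that ratio has positive limits at both ends of $(0,\infty)$), rather than a single power-law comparison over all lengths.
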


The functions $F_n$, $H_n$, and the implied constants $C_n$ which appear in \cite{BK10} are defined by complicated formulas and it is difficult to evaluate or estimate them. In this paper we resolve this issue and find explicit lower bounds in terms of the dimension $n$.  We first prove the following relation between $\Vol(M)$ and $\orth(M)$.

\begin{theorem}\label{thm}
For $n \geq 3$, if $M$ is a compact hyperbolic $n$-manifold $M$ with totally geodesic boundary, then either  $\orth(M) \geq \frac{1}{2}\log\left(\frac{5}{2}\right)$ or
\begin{equation}\label{eq1}
e^{\orth(M)}-1 \geq g_n\sqrt{\frac{2\pi e}{n-1}}\cdot (\Vol(M))^{-\frac{1}{n-2}},
\end{equation}
where $g_n$ is an explicit monotonically increasing function tending to $1$. 
\end{theorem}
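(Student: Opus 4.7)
The plan is to combine the Bridgeman--Kahn identity (Theorem~\ref{BKid}) with a sharp small--$l$ estimate for $F_n$. Since $F_n$ is monotonically decreasing and every term of $\sum_{l\in\Lambda_M}F_n(l)$ is non-negative, we have
$$
\Vol(M)\ \ge\ F_n(\orth(M)).
$$
Hence it is enough to prove an explicit lower bound of the form
$$
F_n(l)\ \ge\ \frac{g_n^{n-2}\,(2\pi e/(n-1))^{(n-2)/2}}{(e^l-1)^{n-2}}\qquad\text{for every }0<l\le\tfrac{1}{2}\log(5/2);
$$
substituting $l=\orth(M)$ and taking $(n-2)$-th roots then produces \eqref{eq1}, while if $\orth(M)>\tfrac{1}{2}\log(5/2)$ we are already in the first alternative of the theorem and there is nothing to prove.

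\textbf{Lower bound on $F_n$.} I would start from the integral representation~\eqref{Fn} of $F_n(l)$ from \cite{BK10}, whose integrand is (essentially) a product of powers of $\sinh$ and $\cosh$ weighted by a hyperbolic $(n{-}1)$-volume factor. A suitable change of variables (of the flavour $u=\tanh$ of an auxiliary parameter, or a rescaling by $\sinh(l/2)$) should isolate the singular factor $(e^l-1)^{-(n-2)}$ and reduce the remaining integrand to a Beta-type integral. Evaluating that as a quotient of Gamma values and then invoking Stirling with explicit two-sided error bounds on $\Gamma(\tfrac{n-1}{2})$ yields the target constant $\bigl(2\pi e/(n-1)\bigr)^{(n-2)/2}$, with the remaining multiplicative discrepancy collected into $g_n^{n-2}$ with $g_n$ monotone increasing and tending to~$1$. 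The particular threshold $\tfrac{1}{2}\log(5/2)$ is presumably chosen so that the sub-leading part of the integral (the error in the Beta reduction) admits a clean, uniform bound throughout $(0,\tfrac{1}{2}\log(5/2)]$---for instance a regime in which $\cosh l$, or some analogous sub-leading factor, can be replaced by an explicit rational upper bound.

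\textbf{Main obstacle.} The strategic outline is short; the real work is in sharpness. Extracting the precise factor $\sqrt{2\pi e/(n-1)}$, rather than a looser $O(n^{-1/2})$ asymptotic, forces one to (i)~choose the substitution in~\eqref{Fn} so that the leading integral becomes \emph{exactly} a Beta function and (ii)~package Stirling's error together with the control of the sub-leading part of the integrand into a single explicit $g_n\uparrow 1$. Verifying that this combined error remains multiplicatively small over the whole interval $(0,\tfrac{1}{2}\log(5/2)]$, and not merely in the limit $l\to 0$, is the point at which the specific threshold is forced on us and where the bulk of the quantitative bookkeeping will lie.
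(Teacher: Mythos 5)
Your skeleton matches the paper's: reduce to $\Vol(M)\ge F_n(\orth(M))$ via Theorem~\ref{BKid}, prove $F_n(l)\ge K_n/(e^l-1)^{n-2}$ on $(0,\tfrac12\log\frac52]$, and then bound $K_n^{1/(n-2)}$ from below by $g_n\sqrt{2\pi e/(n-1)}$ using Stirling-type two-sided estimates for the Gamma function (the paper also needs Legendre's duplication formula to simplify the ratio of sphere volumes). But the proposal leaves out the step that carries essentially all of the technical content, and misdescribes the integrand in a way that hides it. The integrand of \eqref{Fn} is not just ``powers of $\sinh$ and $\cosh$'': it contains the factor $M_n\bigl(\sqrt{(e^{2l}-r^2)/(1-r^2)}\bigr)$, where $M_n$ is itself a double integral (equivalently, the complicated explicit expression of Lemma~\ref{Mn}). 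No change of variables turns \eqref{Fn} into a Beta integral until $M_n(b)$ has been replaced by an explicit comparison function; the paper's Lemma~\ref{Munif} supplies the uniform lower bound $(b-1)^{n-2}M_n(b)\ge A_n(n-1)(n-2)$ valid for $b\in(1,2]$, and proving it is the bulk of the work (splitting the inner $v$-integral, integrating by parts, and tracking the $L_{n-3}$ and $P_{n-2}(1)$ terms so that the constant stays positive and uniform in $n$). Your proposal does not engage with $M_n$ at all, so as written the ``Beta reduction'' cannot get started.

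Relatedly, your guess about the origin of the threshold $\tfrac12\log\frac52$ is not right. It does not come from bounding a $\cosh$ factor; it arises from the interaction of two constraints. The $M_n$ bound only holds for $b\le 2$, which restricts the $r$-integration to $r\le r(a)=\sqrt{(4-a^2)/3}$ with $a=e^l$, so one obtains an \emph{incomplete} Beta function $B\bigl(r(a)^2:\tfrac n2-1,\tfrac n2\bigr)$. To bound this below by $\tfrac12 B(\tfrac n2-1,\tfrac n2)$ uniformly in $n$ one needs $r(a)^2\ge\tfrac12$, i.e.\ $a\le\sqrt{5/2}$, which is exactly $l\le\tfrac12\log\frac52$. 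Also note a small bookkeeping point: the paper's bound is $K_n\ge(2\pi e/(n-1))^{(n-1)/2}\cdot(\cdots)$, not $(\cdots)^{(n-2)/2}$, and the surplus power $\bigl(2\pi e/(n-1)\bigr)^{1/(2(n-2))}$ gets absorbed into $g_n$ after taking $(n-2)$-th roots; your version would need the same adjustment to produce a $g_n$ that is genuinely increasing to $1$.
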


The function $g_n$ is given by equation \eqref{eq g_n} below. In particular, the first few approximate values are $g_3 = 0.120822$, $g_4 = 0.464543$, $g_5=0.563796$, $g_6 = 0.617183$.

One consequence of Theorem~\ref{thm} is the following dichotomy between volume and shortest orthogeodesic:
\begin{corollary}
Let $M$ be a compact hyperbolic manifold with non-empty totally geodesic boundary of dimension $n \geq 3$. Then  either
$$\mbox{ $\Vol(M) \geq 1$}\qquad\mbox{ or } \qquad e^{\orth(M)}-1 \geq \min\left(\sqrt{\frac{5}{2}}-1,g_n\sqrt{\frac{2\pi e}{n-1}}\right).$$
\end{corollary}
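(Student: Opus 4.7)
The plan is to derive the corollary as a direct case analysis from Theorem~\ref{thm}. Theorem~\ref{thm} already presents a dichotomy: either $\orth(M) \geq \tfrac{1}{2}\log(5/2)$, or inequality \eqref{eq1} holds. I would run the corollary's dichotomy in parallel with this, assuming the non-trivial hypothesis $\Vol(M) < 1$ and showing that in both branches of Theorem~\ref{thm} one obtains at least $\min(\sqrt{5/2}-1, g_n\sqrt{2\pi e/(n-1)})$ on the left-hand side.

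In the first branch, monotonicity of the exponential yields
\[
e^{\orth(M)}-1 \;\geq\; e^{\frac{1}{2}\log(5/2)}-1 \;=\; \sqrt{\tfrac{5}{2}}-1,
\]
which clearly dominates the minimum in the statement. In the second branch, the key observation is that since $n\geq 3$, the exponent $-\tfrac{1}{n-2}$ is negative, so the assumption $\Vol(M)<1$ forces
\[
\bigl(\Vol(M)\bigr)^{-\frac{1}{n-2}} \;>\; 1.
\]
Substituting into \eqref{eq1} then gives $e^{\orth(M)}-1 \geq g_n\sqrt{2\pi e/(n-1)}$, which again dominates the minimum. Taking the two cases together produces the stated inequality under the assumption $\Vol(M)<1$, and the corollary follows.

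There is no real obstacle here: the entire content of the corollary is packaged in Theorem~\ref{thm}, and the corollary merely records the fact that once $\Vol(M)<1$ the volume-dependent right-hand side of \eqref{eq1} only strengthens. The only tiny care to take is the sign of the exponent $-\tfrac{1}{n-2}$, which is where the hypothesis $n\geq 3$ enters to guarantee that $\Vol(M)<1$ improves rather than weakens the bound.
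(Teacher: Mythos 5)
Your proof is correct and is exactly the intended argument: the paper states this corollary as an immediate consequence of Theorem~\ref{thm} without a separate proof, and your case analysis (the $\orth(M)\geq\frac12\log(5/2)$ branch giving $\sqrt{5/2}-1$, and the branch where $\Vol(M)<1$ makes $(\Vol(M))^{-1/(n-2)}>1$) is the natural way to fill it in. No issues.
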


The results of \cite{BK10} have a number of applications that can be made more precise now. For example, in \cite{BT11} they were used to estimate the volumes of hyperbolic manifolds with small systole constructed there. Inequality \eqref{eq1} allows us to restate the inequality from \cite[Theorem~1.2]{BT11}:
\begin{corollary}
Hyperbolic manifolds with small systole constructed by Belolipetsky--Thomson in \cite{BT11} satisfy
$$ \Vol(M) \geq \left(\frac{g_n}{2}\sqrt{\frac{2\pi e}{n-1}} \cdot \frac{1}{{\mathrm{Syst}_1(M)}}\right)^{n-2}. $$
\end{corollary}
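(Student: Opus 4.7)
The corollary is essentially a translation of Theorem \ref{thm} into systole language, specialized to the manifolds produced in \cite{BT11}. Those manifolds $M$ are assembled from a hyperbolic $n$-manifold $N$ with totally geodesic boundary by a prescribed cut-and-paste along $\partial N$, in such a way that the short closed geodesic realizing $\mathrm{Syst}_1(M)$ arises from the shortest orthogeodesic of $N$. The key structural facts one reads off from the construction are
$$ \mathrm{Syst}_1(M) \geq \orth(N) \qquad \text{and} \qquad \Vol(M) \geq \Vol(N). $$

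The plan is then to apply Theorem \ref{thm} to $N$. Since the construction produces arbitrarily small systole, one is automatically in the nontrivial branch of the dichotomy, so $\orth(N) < \tfrac{1}{2}\log(5/2)$ and
$$ e^{\orth(N)} - 1 \geq g_n\sqrt{\frac{2\pi e}{n-1}}\,\Vol(N)^{-\frac{1}{n-2}}. $$
To convert this exponential estimate into a linear one on $\orth(N)$, I would invoke the elementary inequality $e^x - 1 \leq 2x$, valid for $x \in [0,\tfrac{1}{2}\log(5/2)]$ (on this interval $(e^x-1)/x$ is increasing and stays below about $1.27$, comfortably under $2$). This gives
$$ \orth(N) \geq \tfrac{1}{2}(e^{\orth(N)}-1) \geq \frac{g_n}{2}\sqrt{\frac{2\pi e}{n-1}}\,\Vol(N)^{-\frac{1}{n-2}}. $$
Combining with $\mathrm{Syst}_1(M) \geq \orth(N)$ and $\Vol(N) \leq \Vol(M)$ (hence $\Vol(N)^{-1/(n-2)} \geq \Vol(M)^{-1/(n-2)}$) produces
$$ \mathrm{Syst}_1(M) \geq \frac{g_n}{2}\sqrt{\frac{2\pi e}{n-1}}\,\Vol(M)^{-\frac{1}{n-2}}, $$
and raising both sides to the $(n-2)$-nd power after rearrangement yields the claimed bound.

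The only real work is bookkeeping: verifying the two structural inequalities $\mathrm{Syst}_1(M) \geq \orth(N)$ and $\Vol(M) \geq \Vol(N)$ as dictated by the specific construction in \cite{BT11}, so that no hidden numerical factor corrupts the constant. All analytic content is contained in Theorem \ref{thm} together with the linearization $e^x-1 \leq 2x$ on the relevant interval.
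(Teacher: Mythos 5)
Your proposal is correct and follows the route the paper intends: the paper offers no written proof of this corollary (it is presented as an immediate restatement of \cite[Theorem~1.2]{BT11} via inequality \eqref{eq1}), and your reconstruction --- cut along the totally geodesic hypersurface that the systolic geodesic meets orthogonally, apply Theorem~\ref{thm} to the resulting manifold with boundary using $\orth(N)\leq \mathrm{Syst}_1(M)$ and $\Vol(N)\leq\Vol(M)$, then linearize via $e^x-1\leq 2x$ on $\left[0,\tfrac12\log\tfrac52\right]$ --- is exactly the computation that produces the stated constant $g_n/2$. The only item you rightly defer to \cite{BT11} is the structural fact that the systole there descends to an orthogeodesic of the cut manifold; that is precisely what their construction guarantees, so no hidden factor is lost.
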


We also use our analysis to investigate the relation between $\Vol(M)$ and $\Vol(\partial M)$ which  we compare  with the results of Miyamoto in \cite{Miy94}. We prove

\begin{theorem}\label{thm2} 
Let $M$ be a compact hyperbolic manifold with non-empty totally geodesic boundary of dimension $n \geq 3$. Then  either
\begin{align}\label{eq2}
\Vol(M) & \geq \frac{1}{4}\log\left(\frac{5}{2}\right)\Vol(\partial M) \nonumber\\
\text{ or }\quad &\\
\Vol(M)  &\geq \frac{h_n}{3}\sqrt{\frac{2\pi e}{n-1}}\cdot (\Vol(\partial M))^{\frac{n-2}{n-1}},\nonumber
\end{align}
where $h_n$ is an explicit monotonically increasing function tending to $1$.
\end{theorem}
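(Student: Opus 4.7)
\textit{Plan.} My strategy is to combine a collar neighborhood estimate for $\partial M$ with the bound supplied by Theorem~\ref{thm}, performing a case split on the length $l_*:=\orth(M)$ of the shortest orthogeodesic.

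\textit{Collar ingredient.} Because $\partial M$ is totally geodesic, its normal injectivity radius in $M$ is at least $l_*/2$: if two normal geodesics emanating from distinct points of $\partial M$ met at depths $t_p,t_q \leq l_*/2$, either their tangent vectors at the meeting point would be antiparallel, producing an orthogeodesic of length $t_p+t_q<l_*$ (a contradiction), or one would form a non-degenerate hyperbolic triangle with two right angles, which does not exist since the angle sum of a hyperbolic triangle is strictly less than $\pi$. The parallel hypersurfaces to a totally geodesic boundary in ${\bf H}^n$ are scaled by $\cosh t$ in each tangent direction at depth $t$, so
\[
\Vol(M)\ \geq\ \Vol(\partial M)\int_0^{l_*/2}\cosh^{n-1}(t)\,dt\ \geq\ \frac{l_*}{2}\,\Vol(\partial M).
\]

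\textit{Dichotomy.} Let $l_0:=\tfrac12\log(5/2)$ be the threshold from Theorem~\ref{thm}. If $l_*\geq l_0$, the collar estimate immediately gives $\Vol(M)\geq\tfrac14\log(5/2)\,\Vol(\partial M)$, which is the first alternative in~\eqref{eq2}. If instead $l_*<l_0$, then $x:=e^{l_*}-1<\sqrt{5/2}-1$, and Theorem~\ref{thm} supplies
\[
x\ \geq\ g_n\sqrt{\frac{2\pi e}{n-1}}\,\Vol(M)^{-1/(n-2)}.
\]
Since $t\mapsto\log(1+t)/t$ is monotonically decreasing on $(0,\infty)$, on $(0,\sqrt{5/2}-1]$ it is bounded below by $l_0/(\sqrt{5/2}-1)$, whence $l_*=\log(1+x)\geq \tfrac{l_0}{\sqrt{5/2}-1}\,x$. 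Inserting this into the collar estimate and solving for $\Vol(M)^{(n-1)/(n-2)}$ yields
\[
\Vol(M)^{(n-1)/(n-2)}\ \geq\ \frac{l_0}{2(\sqrt{5/2}-1)}\,g_n\sqrt{\frac{2\pi e}{n-1}}\,\Vol(\partial M),
\]
and extracting the $(n-2)/(n-1)$-th root gives the second alternative of~\eqref{eq2} with an explicit constant $K_n$, which we repackage as $(h_n/3)\sqrt{2\pi e/(n-1)}$.

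\textit{Main obstacle.} The conceptual structure above is clean; the remaining effort is the analytic verification that $h_n$ defined this way is monotonically increasing in $n$ and tends to $1$. Since $g_n\to 1$ and $(n-2)/(n-1)\to 1$, the constant produced has a well-defined leading asymptotic of the form $c\sqrt{2\pi e/(n-1)}$, and landing $c$ exactly at $1/3$ (so that $h_n\to 1$) may require retaining the sharper collar integral $\int_0^{l_*/2}\cosh^{n-1}(t)\,dt$ in Case~2 rather than bounding it by $l_*/2$, and/or a refined $\log(1+x)\geq \phi(x)$ comparison on $(0,\sqrt{5/2}-1]$. Monotonicity in $n$ then follows from the analogous property of $g_n$ together with a direct analytic check of the packaging formula.
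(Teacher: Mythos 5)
Your argument follows the paper's proof in all essentials: the same collar estimate $\Vol(M)\geq \Vol(\partial M)\,S_n(\orth(M)/2)\geq \tfrac{\orth(M)}{2}\Vol(\partial M)$, the same lower bound $F_n(L)\geq K_n/(e^L-1)^{n-2}$ for $L\leq\tfrac12\log(5/2)$ (which you access through Theorem~\ref{thm} rather than through Lemmas~\ref{Fb} and~\ref{Kn} directly), and the same linearization $e^L-1\leq aL$ with $a=(\sqrt{5/2}-1)/\log\sqrt{5/2}$. The only organizational difference is that you case-split on $\orth(M)$ itself, whereas the paper splits on the crossing point of $F_n(x)$ with $Ax/2$; both splits are valid and yield the same two alternatives.

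The ``main obstacle'' you flag is not actually an obstacle, and no sharper collar integral or refined logarithm comparison is needed. Writing $Q_n=P_{n-3}(1)+\bigl(1-3^{-(n-2)}\bigr)\bigl(P_{n-2}(1)+\log(3/4)\bigr)$ and $B_n=\frac{3Q_n}{2^{3/2}e^{5/2}(n-2)}$, one checks from \eqref{eq g_n} that $g_n^{\,n-2}=B_n\sqrt{2\pi e/(n-1)}$, whence
$$\left(g_n\sqrt{\tfrac{2\pi e}{n-1}}\right)^{\frac{n-2}{n-1}}=\left(B_n\left(\tfrac{2\pi e}{n-1}\right)^{\frac{n-1}{2}}\right)^{\frac{1}{n-1}}=h_n\sqrt{\tfrac{2\pi e}{n-1}},$$
with $h_n=B_n^{1/(n-1)}$ exactly the function \eqref{eq h_n} of the paper. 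Your Case~2 conclusion therefore reads
$$\Vol(M)\ \geq\ (2a)^{-\frac{n-2}{n-1}}\,h_n\sqrt{\tfrac{2\pi e}{n-1}}\,\Vol(\partial M)^{\frac{n-2}{n-1}}\ \geq\ \frac{h_n}{2a}\sqrt{\tfrac{2\pi e}{n-1}}\,\Vol(\partial M)^{\frac{n-2}{n-1}}\ \geq\ \frac{h_n}{3}\sqrt{\tfrac{2\pi e}{n-1}}\,\Vol(\partial M)^{\frac{n-2}{n-1}},$$
using $1<2a\approx 2.537<3$; in fact your exponent $(2a)^{-(n-2)/(n-1)}$ is marginally better than the paper's $(2a)^{-1}$. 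The remaining check that $h_n$ increases to $1$ is then a direct inspection of \eqref{eq h_n} ($Q_n/(n-2)\to 0$ while the bracket is raised to the power $1/(n-1)$), which is precisely the verification the paper itself performs.
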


The function $h_n$ is given by equation \eqref{eq h_n} with the first few approximate values $h_3 = 0.203335$, $h_4 = 0.448875$, $h_5 = 0.542675$, $h_6 = 0.601147$.

In earlier work Miyamoto obtained a lower bound for the volume in terms of a linear function of the volume of the boundary:
\begin{theorem}
{(Miyamoto, \cite[Theorem~4.2]{Miy94})}
Let $M$ be a hyperbolic $n$-manifold with totally geodesic boundary. Then there are constants $\rho_n > 0$ such that
\begin{equation}
\Vol(M) \geq \rho_n \cdot \Vol(\partial M).
\label{meq}
\end{equation}
\end{theorem}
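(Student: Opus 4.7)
The plan is to produce $\rho_n$ geometrically, as the volume of an embedded metric collar around $\partial M$. First, I would show that there exists $w_n > 0$, depending only on the dimension $n$, such that the normal exponential map
\[
\exp^\perp : \partial M \times [0, w_n) \to M, \qquad (p,t) \mapsto \exp_p(t\, \nu_p),
\]
is a diffeomorphism onto its image, where $\nu_p$ denotes the inward unit normal at $p$. Equivalently, the open $w_n$-neighbourhood of $\partial M$ embeds in $M$ as a collar.

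Once such a collar is in hand, the linear lower bound drops out of the standard Fermi-coordinate computation around a totally geodesic hypersurface in ${\bf H}^n$: the equidistant hypersurface at signed distance $t$ from $\partial M$ scales the tangential metric by $\cosh(t)$, so the collar has volume
\[
\Vol(\partial M) \cdot \int_0^{w_n} \cosh^{n-1}(t)\, dt,
\]
and taking $\rho_n$ equal to this integral immediately yields $\Vol(M) \geq \rho_n \cdot \Vol(\partial M)$.

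The main obstacle is establishing the uniform collar width $w_n$. The constructions of \cite{BT11} show that $\orth(M)$ can be arbitrarily small, so the $w_n$-collar will fail to be embedded for manifolds with very short orthogeodesics, and one cannot expect $w_n$ to be the naive half-orthogeodesic bound. I would handle this by passing to the double $DM$, a closed hyperbolic $n$-manifold equipped with an isometric involution $\sigma$ whose fixed locus is $\partial M$, and applying the Margulis lemma there. Failure of the normal exponential map to be injective near a point $p \in \partial M$ forces either a short loop in $\partial M$ based near $p$ or a short orthogeodesic meeting $\partial M$ near $p$; in $DM$ both produce a short closed geodesic whose Margulis tube is $\sigma$-invariant and contributes a definite amount of volume depending only on $n$. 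Decomposing $\partial M$ into a \emph{thick} part, where $\exp^\perp$ is injective out to some width $w_n$, and a \emph{thin} part, whose contribution to $\Vol(M)$ is controlled below by the volume of the surrounding Margulis tubes in $DM$, and balancing the two contributions yields a single effective constant $\rho_n > 0$ for which the stated inequality holds in all cases.
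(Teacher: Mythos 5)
Your strategy is genuinely different from the proof the paper is citing: Miyamoto's argument (recalled in Section~\ref{sec6}) runs through hypersphere packings and a B\"or\"oczky-type density theorem, producing $\rho_n$ as a ratio of volumes of a truncated and a regular hyperbolic simplex, later made explicit by Kellerhals \cite{KelHab}. A collar-plus-thick/thin argument in the double is a reasonable alternative outline, and your instinct that the whole difficulty is the absence of a uniform collar width is correct. (One small inaccuracy: a short geodesic loop \emph{inside} the totally geodesic hypersurface $\partial M$ does not obstruct the normal exponential map, since the equidistant hypersurfaces expand by $\cosh(t)$; the normal injectivity radius at $p$ is governed only by geodesic arcs that leave $\partial M$ and return, i.e.\ essentially by $\orth$-type quantities.)

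The genuine gap is in the thin part, exactly where you wave your hands. What you must prove there is $\Vol(M\cap T_i)\geq \rho_n\cdot \Vol(\partial M\cap T_i)$ for each $\sigma$-invariant tube $T_i$, and the justification you offer --- that each tube ``contributes a definite amount of volume depending only on $n$'' --- cannot deliver this: a fixed volume $v_n>0$ per tube is useless because $\Vol(\partial M\cap T_i)$ is \emph{unbounded}. Indeed, by Basmajian's identity quoted in the introduction, the two feet of an orthogeodesic of length $\ell$ carry disjoint embedded disks of radius $\log\coth(\ell/2)$ in $\partial M$, whose $(n-1)$-volume blows up like $\ell^{-(n-2)}$ as $\ell\to 0$, and all of that area lies in a single tube. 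What is actually needed is the comparison $\Vol(T_i)\geq c_n\,\Vol(\partial M\cap T_i)$, which is true but only after the quantitative tube estimates: the embedded tube radius $R_i$ around a core of length $\ell_i$ satisfies $e^{R_i}\asymp \epsilon_n/\ell_i$, so $\Vol(T_i)\asymp \ell_i e^{(n-1)R_i}\asymp \ell_i^{-(n-2)}$, which is comparable to (not merely a constant independent of) the boundary area it must dominate. None of this appears in your sketch, and the ``balancing'' of the two regimes is precisely where the theorem's content lives. It is telling that the present paper, following essentially your collar route but controlling the thin regime with the Bridgeman--Kahn identity ($F_n(\ell)\to\infty$ as $\ell\to 0$) rather than Margulis tubes, obtains in Theorem~\ref{thm2} only a dichotomy whose thin-case bound has the sublinear exponent $\frac{n-2}{n-1}$; recovering the clean linear inequality \eqref{meq} by these methods requires the missing tube-versus-slice estimate to be carried out with care, or else Miyamoto's packing argument.
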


One application of both \eqref{eq2} and  \eqref{meq} is to obtain lower bounds on the volume of a hyperbolic manifold with totally geodesic boundary in terms of the dimension. Although both use very different methods, their resulting bounds are surprisingly similar. 

For $n$ even,  applying the Gauss-Bonnet formula for the double $DM$ gives
$$\Vol(M) = \frac{1}{2}\Vol(DM) = \frac{|\chi(DM)|}{4}V_{n} \geq \frac{1}{4}V_{n},$$
where $V_{n}$ is the volume of the unit $n$-sphere in ${\bf R}^{n+1}$. For $n$ odd, both \eqref{eq2} and \eqref{meq} can be used to leverage the Gauss--Bonnet theorem on the boundary to give lower bounds for the volume of the manifolds. 

In  \cite{KelHab}, Kellerhals used packing estimates to show that Miyamoto's function $\rho_n$ is monotonically increasing with the approximate values $\rho_3 = 0.29156$, $\rho_4 = 0.43219$, $\rho_5 = 0.54167$, $\rho_6 = 0.64652$. 

Thus for $M$ a hyperbolic $n$-manifold with non-empty totally geodesic boundary and $n$ odd we have
 $$\Vol(M) \geq \frac{\rho_n}{2} V_{n-1}.$$

Using our bound in \eqref{eq2} we can derive a similar estimate. We prove
\begin{theorem}\label{thm-vol}
Let $M$ be a hyperbolic $n$-manifold with non-empty totally geodesic boundary and $n$ odd. Then
$$\Vol(M) \geq  \min\left(\frac{1}{8}\log\left(\frac{5}{2}\right),\frac{h_n}{6}\right) V_{n-1}.$$
\end{theorem}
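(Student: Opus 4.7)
The plan is to combine Theorem~\ref{thm2} with the Gauss--Bonnet theorem applied to the even-dimensional boundary of $M$. Since $n$ is odd, $\partial M$ is a closed hyperbolic manifold of even dimension $n-1$, so Gauss--Bonnet gives
$$\Vol(\partial M) = \frac{|\chi(\partial M)|}{2}\,V_{n-1} \geq \frac{V_{n-1}}{2},$$
the last inequality because $\chi(\partial M)$ is a nonzero integer for any closed hyperbolic manifold of even dimension.

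Next I would feed this lower bound on $\Vol(\partial M)$ into the two alternatives of Theorem~\ref{thm2}. In the first case I immediately obtain $\Vol(M) \geq \tfrac{1}{8}\log(5/2)\,V_{n-1}$. In the second case I get
$$\Vol(M) \geq \frac{h_n}{3}\sqrt{\frac{2\pi e}{n-1}}\left(\frac{V_{n-1}}{2}\right)^{(n-2)/(n-1)},$$
and to conclude $\Vol(M) \geq h_n V_{n-1}/6$ it suffices to establish the dimension-only inequality
$$V_{n-1} \leq 2\left(\frac{2\pi e}{n-1}\right)^{(n-1)/2}.$$
Taking the minimum of the two resulting bounds then yields the stated estimate.

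The only real calculation is this last sphere-volume inequality, which is where I expect the main obstacle to lie. Using $V_{n-1} = 2\pi^{n/2}/\Gamma(n/2)$, writing $n=2k+1$, and applying $\Gamma(k+\tfrac12) = (2k)!\sqrt{\pi}/(4^k\,k!)$, the inequality reduces to $4^k\, k!\,k^k \leq (2k)!\,e^k$. This should follow cleanly from Stirling's formula: upper-bounding $k!$ and lower-bounding $(2k)!$ via $m! = \sqrt{2\pi m}(m/e)^m e^{\theta_m/(12m)}$ with $0<\theta_m<1$, the quotient $4^k\, k!\,k^k/((2k)!\,e^k)$ is bounded above by $e^{1/(12k)}/\sqrt{2}$, which is less than $1$ for every $k \geq 1$. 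The small-$k$ base cases can additionally be checked by hand as a sanity check.
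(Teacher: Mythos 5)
Your proposal is correct and follows essentially the same route as the paper: apply Theorem~\ref{thm2}, use Gauss--Bonnet on the even-dimensional boundary to get $\Vol(\partial M)\geq V_{n-1}/2$, and reduce the second alternative to the dimension-only inequality $V_{n-1}\leq 2\left(\frac{2\pi e}{n-1}\right)^{(n-1)/2}$, which the paper also obtains from Stirling's approximation (in the slightly stronger form $V_{n-1}\leq \frac{1}{\sqrt{2}}\left(\frac{2\pi e}{n-1}\right)^{(n-1)/2}$). Your explicit verification via the duplication formula and the bound $e^{1/(12k)}/\sqrt{2}<1$ checks out and simply makes precise the Stirling step the paper states without detail.
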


The paper is organized as follows. We first describe the functions $F_n(x)$, $M_n(x)$ and by a careful analysis obtain uniform lower bounds for each as functions of $n$ and $x$. An important step is bounding an incomplete Beta function which requires us to restrict to $x \leq \frac{1}{2}\log\left(\frac{5}{2}\right)$ (see Lemma~\ref{Fb}).  We then apply these bounds to prove the bounds on volume and ortholength  in Theorems~\ref{thm}  and \ref{thm2} above. In Section~\ref{sec5} we consider more carefully the three dimensional case. In Section~\ref{sec6} we conclude with the proof of Theorem~\ref{thm-vol} and a related discussion.

\medskip

{\noindent\bf Acknowledgments.} We thank Ruth Kellerhals for helpful correspondence. We would also like to thank the referee for their comments and insights which improved the paper. 

\section{The functions $F_n$ and $M_n$}
In the prior paper, an integral formula for $F_n$ is derived. We let $V_k$ be the volume of the unit k-sphere in ${\bf R}^{k+1}$. Then from \cite{BK10} we have\footnote{The original formula had an incorrect factor of $2$ rather than $2^{n-1}$ which was  corrected by \cite[Theorem 2.1]{VY}.}
\begin{equation}
F_n(l) =\frac{2^{n-1}V_{n-2}V_{n-3}}{V_{n-1}}\int_0^1 \frac{r^{n-3}}{\left(\sqrt{1-r^2}\right)^{n-2}}.M_n\left(\sqrt{\frac{e^{2l}-r^2}{1-r^2}}\right)dr,
\label{Fn}
\end{equation}
where 
\begin{equation}
M_n(b) = \int^{1}_{-1} du\int^{\infty}_{b}  \frac{\log\left(\frac{(v^2-1)(u^2-b^2)}{(v^2-b^2)(u^2-1)}\right)}{(v-u)^{n}}dv.
\label{Mneqn}
\end{equation}

Furthermore it is shown that the function $M_n(b)$ can be given in terms of standard functions. In order to describe this function, we define the following. For  $n \geq 1$  we define the polynomial function $P_n$ by
$$P_n(x) = \sum_{k=1}^n \frac{x^k}{k}.$$
We also define $P_0(x) = 0$. We note that for $|x| < 1$, $P_n(x)$ is the first $n$ terms of the Taylor series of $-\log(1-x)$. 
We therefore define the function $L_n(x)$ by
$L_n(x) = \log|1-x| + P_n(x).$
For  $|x| < 1$ we have
$$L_n(x) = -\sum_{k=n+1}^\infty \frac{x^k}{k}.$$
We note that $L_0(x) = \log|1-x|$. We also note that $P_n(1) = 1 + \frac{1}{2}+\ldots+\frac{1}{n}$, the n$^{th}$ Harmonic number. Using these functions, $M_n$ can be written down explicitly.

\begin{lemma}{(Bridgeman--Kahn, \cite[Lemma 7]{BK10})}
The function  $M_n:(1,\infty) \rightarrow {\bf R}_+$ has the explicit form
$$(n-1)(n-2)M_n(b) = $$
$$\frac{1}{(b-1)^{n-2}}\left( \log\left(\frac{(b+1)^2}{4b}\right)+2P_{n-2}(1)-L_{n-3}\left(\frac{b-1}{b+1}\right)-(-1)^n L_{n-3}\left(\frac{-b+1}{b+1}\right)\right)$$
$$ +\frac{1}{(b+1)^{n-2}}\left(-\log\left(\frac{(b-1)^2}{4b}\right)-2P_{n-2}(1)+L_{n-3}\left(\frac{b+1}{b-1}\right) +(-1)^n L_{n-3}\left(\frac{-b-1}{b-1}\right)\right)$$
$$+\frac{1}{(2b)^{n-2}}\left(L_{n-3}\left(\frac{2b}{b+1}\right)-L_{n-3}\left(\frac{2b}{b-1}\right)\right)+\frac{1}{2^{n-2}}\left(L_{n-3}\left(\frac{2}{b+1}\right)-(-1)^nL_{n-3}\left(\frac{-2}{b-1}\right)\right).$$
Furthermore, $M_n$ satisfies 
$$\lim_{b \rightarrow 1^+} (b-1)^{n-2}M_n(b) =  \frac{2P_{n-2}(1)}{(n-1)(n-2)} \qquad\mbox{and}\qquad
  \lim_{b \rightarrow \infty} \frac{b^{n-1}}{\log{b}}.M_n(b) = \frac{4}{n-1}.
 $$
\label{Mn}
\end{lemma}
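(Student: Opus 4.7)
The plan is to evaluate the double integral (\ref{Mneqn}) directly. Since for $|u|<1<b<v$ each of $v\pm 1$, $v\pm b$, $1\pm u$, $b\pm u$ has a definite sign, the integrand's logarithm factors as a sum of differences $\log((v-\alpha)/(v-\beta))$ and $\log((b\pm u)/(1\pm u))$ with $\alpha,\beta\in\{\pm 1,\pm b\}$, reducing $M_n(b)$ to a sum of four iterated integrals of a standard form.

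For each inner $v$-integral of type $\int_b^\infty \log((v-\alpha)/(v-\beta))\,(v-u)^{-n}\,dv$, I would integrate by parts using the antiderivative $-1/[(n-1)(v-u)^{n-1}]$. The boundary at $v=\infty$ vanishes since the log decays like $1/v$; the boundary at $v=b$ gives $\log((b-\alpha)/(b-\beta))/[(n-1)(b-u)^{n-1}]$. The leftover rational integral splits via the partial-fraction identity
$$\frac{1}{(v-u)^{n-1}(v-\alpha)}=\frac{1}{(\alpha-u)^{n-1}}\left[\frac{1}{v-\alpha}-\frac{1}{v-u}\right]-\sum_{k=2}^{n-1}\frac{1}{(\alpha-u)^{n-k}(v-u)^{k}},$$
where the bracket is the unique integrable combination on $[b,\infty)$ and each tail term integrates to $1/[(k-1)(b-u)^{k-1}]$. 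The result is a finite sum of terms of the form $\log(b-u)/(\alpha-u)^{n-1}$ and $1/[(b-u)^{j}(\alpha-u)^{k}]$, to be integrated in $u$ over $[-1,1]$.

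Performing the $u$-integration then converts the resulting power sums into the auxiliary $L_{n-3}$: applying partial fractions once more in $u$ to the products $1/[(b-u)^{j}(\alpha-u)^{k}]$ reduces them to single-pole pieces, whose integrals $\int_{-1}^1 (b-u)^{-j}du$ and $\int_{-1}^1 (\alpha-u)^{-k}du$ produce either $\log$ differences or power terms evaluated at $u=\pm 1$. The identity $L_n(x)=\log|1-x|+P_n(x)$ lets the truncated Taylor sums $\sum_{k=1}^{n-3}x^k/k$ so generated combine with the boundary $\log$ terms into $L_{n-3}$ evaluated at one of $(b\mp 1)/(b\pm 1)$, $2b/(b\pm 1)$, $\pm 2/(b\mp 1)$. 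The four singular pivots in $(b-u)$, $(\alpha-u)$ at $u=\pm1$ produce the four prefactors $(b-1)^{-(n-2)}$, $(b+1)^{-(n-2)}$, $(2b)^{-(n-2)}$, $2^{-(n-2)}$, while the $(-1)^n$ factors track the parity of $\alpha=-1$ versus $\alpha=+1$ contributions under the $u$-integration.

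For the asymptotics, at $b\to 1^+$ only the $(b-1)^{-(n-2)}$ prefactor diverges; in its bracket $\log((b+1)^2/4b)\to 0$, and both $L_{n-3}((b-1)/(b+1))$ and $L_{n-3}((-b+1)/(b+1))$ tend to $L_{n-3}(0)=0$, leaving $2P_{n-2}(1)$ to give the claimed limit. At $b\to\infty$, expanding $L_{n-3}$ near its logarithmic singularity via $L_{n-3}(1-\epsilon)=-\log\epsilon+P_{n-3}(1)+O(\epsilon)$, the $\log b$ contributions from $\log((b\pm 1)^2/4b)$ and from $L_{n-3}((b\mp 1)/(b\pm 1))$ combine against the $(b\pm 1)^{-(n-2)}$ prefactors to produce the coefficient $4/(n-1)$ after multiplying by $b^{n-1}/\log b$. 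The main obstacle is the bookkeeping: many contributions of the same order must cancel or combine with correct signs, and matching the truncated Taylor sums to the specific $L_{n-3}$ arguments requires careful tracking through the successive partial-fraction decompositions, with the coefficient check in the $b\to\infty$ limit being the most delicate point.
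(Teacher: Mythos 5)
This lemma is quoted from \cite[Lemma 7]{BK10}; the paper does not reprove it, only noting that it is a consequence of the antiderivative and limit formulae of Lemma~\ref{L_n_formula}. Your plan follows essentially that same route: the partial-fraction identity you write down, summed against the logarithm, is exactly what generates the closed-form antiderivative $\frac{1}{n-1}\bigl(L_{n-2}(\tfrac{a-b}{x-b})(a-b)^{-(n-1)}-\log|x-a|(x-b)^{-(n-1)}\bigr)$ of Lemma~\ref{L_n_formula}, so the overall strategy is the right one. But as written there is a genuine gap at the step where you assert that "the boundary at $v=b$ gives $\log((b-\alpha)/(b-\beta))/[(n-1)(b-u)^{n-1}]$." The factor $(v-b)$ necessarily occurs among your $\beta$'s, and for that factor the boundary term $\log(v-b)/[(n-1)(v-u)^{n-1}]$ diverges as $v\to b^+$; it must be cancelled against a matching divergence in the leftover rational integral. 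This regularized limit is precisely the second formula of Lemma~\ref{L_n_formula}, and it is the source of the constants $2P_{n-2}(1)$ in the stated expression --- the very terms that govern the $b\to 1^+$ limit --- so it cannot be absorbed into "bookkeeping." The same issue recurs at $u\to 1^-$ for the $\log(1-u)$ factor in the $u$-integration.

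The $b\to\infty$ asymptotic is also understated. Beyond the logarithmic contributions you track, each group contributes terms of order $b^{-(n-2)}$ \emph{without} a logarithm (for instance $2P_{n-2}(1)\cdot(b-1)^{-(n-2)}$, and the fourth group, since $L_{n-3}(2/(b+1))=O(b^{-(n-2)})$), and $b^{-(n-2)}$ dominates the claimed $\log b\cdot b^{-(n-1)}$. One must therefore verify that all $O(b^{-(n-2)})$ contributions cancel; the surviving term comes from the common factor $2\log b$ in the first two brackets multiplying $(b-1)^{-(n-2)}-(b+1)^{-(n-2)}\sim 2(n-2)b^{-(n-1)}$, which yields $4(n-2)\log b\cdot b^{-(n-1)}$ and hence the stated limit after dividing by $(n-1)(n-2)$. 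Your sketch never exhibits this cancellation, which is where the actual content of the second limit lies.
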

 
We note that the above is a consequence of the following formulae.

\begin{lemma}{(Bridgeman--Kahn, \cite[Corollary 6]{BK10})} For $n \geq 2$
$$\int \frac{\log|x-a|}{(x-b)^n} dx = \frac{1}{n-1}\left(\frac{L_{n-2}\left(\frac{a-b}{x-b}\right)}{(a-b)^{n-1}} -\frac{\log|x-a|}{(x-b)^{n-1}}\right).$$
Furthermore, for $k \geq 1$
$$\lim_{x \rightarrow a} \left(\frac{\log|x-a|}{(b-x)^{k}}-\frac{L_{n}\left(\frac{b-a}{b-x}\right)}{(b-a)^{k}}\right) = 
\frac{\log|b-a|-P_{n}(1)}{(b-a)^{k}}.$$
\label{L_n_formula}\end{lemma}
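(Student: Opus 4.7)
\medskip

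\noindent\textbf{Proof proposal for Lemma~\ref{L_n_formula}.}

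The plan is to handle the two assertions separately, with the first providing the machinery that makes the second transparent.

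For the antiderivative identity, I would apply integration by parts with $u = \log|x-a|$ and $dv = (x-b)^{-n}\,dx$, which reduces the problem to evaluating $\int \frac{dx}{(x-a)(x-b)^{n-1}}$. To compute this remaining integral in closed form, I would make the substitution $t = \frac{a-b}{x-b}$, so that $x-b = (a-b)/t$ and $x-a = (a-b)(1-t)/t$. A direct calculation then turns the integrand into a constant multiple of $\frac{t^{n-2}}{1-t}\,dt$. The key algebraic step is the polynomial division identity
\[
\frac{t^{n-2}}{1-t} \;=\; \frac{1}{1-t} - \bigl(1 + t + t^2 + \cdots + t^{n-3}\bigr),
\]
which integrates termwise to $-\log|1-t| - P_{n-2}(t) = -L_{n-2}(t)$ (using the definitions of $P_{n-2}$ and $L_{n-2}$ given in the excerpt). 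Tracking the constants back through the substitution and the integration-by-parts step produces the claimed formula. The main (mild) obstacle is keeping the signs and the factors of $a-b$ straight through the substitution.

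For the limit formula, I would work directly from the definition $L_n(y)=\log|1-y|+P_n(y)$ with $y = t := \frac{b-a}{b-x}$. The decisive observation is that
\[
1 - t \;=\; 1 - \frac{b-a}{b-x} \;=\; \frac{a-x}{b-x},
\]
so $\log|1-t| = \log|x-a| - \log|b-x|$. Substituting this decomposition into $L_n(t)/(b-a)^k$ and regrouping gives
\[
\frac{\log|x-a|}{(b-x)^k} - \frac{L_n(t)}{(b-a)^k}
= \log|x-a|\!\left(\frac{1}{(b-x)^k}-\frac{1}{(b-a)^k}\right) + \frac{\log|b-x| - P_n(t)}{(b-a)^k}.
\]
The second term is continuous at $x=a$ and limits to the claimed $\bigl(\log|b-a|-P_n(1)\bigr)/(b-a)^k$. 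The first term vanishes in the limit because the difference in parentheses is $O(x-a)$, which dominates the logarithmic singularity; this is the only step requiring justification, and a first-order Taylor expansion of $(b-x)^{-k}$ around $x=a$ makes it immediate. Combining these two observations yields the stated limit, and the lemma follows.
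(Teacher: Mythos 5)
Your proposal is correct, and both halves check out. The integration by parts gives $\frac{-\log|x-a|}{(n-1)(x-b)^{n-1}}+\frac{1}{n-1}\int\frac{dx}{(x-a)(x-b)^{n-1}}$; under $t=\frac{a-b}{x-b}$ one gets $x-a=(a-b)\frac{1-t}{t}$ and $dx=-\frac{a-b}{t^2}dt$, so the remaining integrand becomes $\frac{-1}{(a-b)^{n-1}}\cdot\frac{t^{n-2}}{1-t}\,dt$, and your division identity integrates to $-L_{n-2}(t)$, producing exactly the stated antiderivative (one can also confirm it instantly by differentiating the right-hand side using $L_m'(y)=-y^m/(1-y)$). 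The limit computation is likewise sound: the decomposition $\log|1-t|=\log|x-a|-\log|b-x|$ isolates the singular part, and the leftover factor $\left(\frac{1}{(b-x)^k}-\frac{1}{(b-a)^k}\right)=O(x-a)$ kills the logarithm. Note that the paper itself does not prove this lemma --- it is quoted from Bridgeman--Kahn \cite[Corollary 6]{BK10} --- so there is no in-paper argument to compare against; your derivation is the natural self-contained one, and it is essentially how the functions $L_n$ are designed to work (they are built precisely so that $\int t^{n}/(1-t)\,dt=-L_{n}(t)$ and so that $L_n(t)-\log|1-t|=P_n(t)$ is regular at $t=1$).
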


\section{Explicit lower bounds for $F_n$, $M_n$}
In this section, we give explicit lower bounds on for the functions $F_n, M_n$. As these functions are only defined for $n \geq 3$, in the following a standing assumption is that $n \geq 3$. In order to obtain our bounds, we need to derive a  lower bound on $M_n(b)$ which is uniform  both in $n$ and $b$. By Lemma~\ref{Mn} we have
$$\lim_{b \rightarrow 1^+} (b-1)^{n-2}M_n(b) =  \frac{2P_{n-2}(1)}{(n-1)(n-2)}.$$
We prove the following uniform lower bound. 
\begin{lemma}
$$(b-1)^{n-2}M_n(b) \geq \frac{P_{n-3}(1)+\left(1-\frac{1}{3^{n-2}}\right)\left(P_{n-2}(1)+\log\left(3/4\right)\right)}{(n-1)(n-2)}  \qquad b \in (1,2].$$
\label{Munif}
\end{lemma}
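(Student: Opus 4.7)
The plan is to work directly with the explicit formula for $M_n(b)$ given in Lemma~\ref{Mn}. Multiplying by $(b-1)^{n-2}$ and substituting $L_{n-3}(x) = \log|1-x|+P_{n-3}(x)$ throughout, the left-hand side rearranges as
\begin{equation*}
(n-1)(n-2)(b-1)^{n-2}M_n(b) = S_1(b) + \Bigl(\tfrac{b-1}{b+1}\Bigr)^{\!n-2}\!S_2(b) + \Bigl(\tfrac{b-1}{2b}\Bigr)^{\!n-2}\!S_3(b) + \Bigl(\tfrac{b-1}{2}\Bigr)^{\!n-2}\!S_4(b),
\end{equation*}
where $S_j$ is the $j$-th bracket appearing in Lemma~\ref{Mn}. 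On $b \in (1,2]$ the three multiplicative prefactors are bounded by $3^{-(n-2)}$, $4^{-(n-2)}$ and $2^{-(n-2)}$ respectively. Inside each $S_j$ the apparently singular $\log$-terms such as $-\log((b-1)^2/(4b))$ are absorbed by the $\log$-parts of the $L_{n-3}$ factors whose arguments have modulus $\geq 3$ (for instance $L_{n-3}((b+1)/(b-1))$ supplies $\log(2/(b-1))$), leaving a quantity that extends continuously to $b = 1$.

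The next step is to isolate the main contribution. In $S_1(b)$ the dominant summand is $2P_{n-2}(1)$, the additional piece $\log((b+1)^2/(4b))$ is nonnegative on $(1, 2]$, and the $L_{n-3}(\pm (b-1)/(b+1))$ terms at arguments in $[-1/3, 1/3]$ are controlled by the tail bound
\begin{equation*}
|L_{n-3}(x)| \;\leq\; \sum_{k \geq n-2} \frac{|x|^k}{k} \;\leq\; \frac{|x|^{n-2}}{(n-2)(1-|x|)},
\end{equation*}
yielding an error of order $3^{-(n-2)}$. For $S_2(b)$, once the singular logs cancel, only $-2P_{n-2}(1) + \log(\text{benign})$ together with $P_{n-3}$-evaluations of the large arguments survive; multiplication by $((b-1)/(b+1))^{n-2} \leq 3^{-(n-2)}$ converts this into the $-(P_{n-2}(1)+\log(3/4))/3^{n-2}$ correction in the stated bound, while the $P_{n-3}$ parts produce the $P_{n-3}(1)$ summand via their boundary values and the identity $P_{n-2}(1) - P_{n-3}(1) = 1/(n-2)$. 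The remaining brackets $S_3, S_4$ are damped by $4^{-(n-2)}$ and $2^{-(n-2)}$ respectively and are shown to contribute either nonnegatively or at a magnitude absorbable into the $3^{-(n-2)}$ correction already accounted for.

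The main obstacle will be the bookkeeping of the $(-1)^n$ factors in the formula: several of the $L_{n-3}$ evaluations enter with parity-dependent signs, so the estimates must be carried out in parallel for even and odd $n$ and then unified into a single lower bound. A secondary subtlety is that obtaining the exact coefficient $(1 - 3^{-(n-2)})$ in the statement, rather than a weaker multiple, requires retaining the full geometric-series tail in the bound on $L_{n-3}$ instead of only its leading term. Once these are handled, the worst case of the inequality is realized at $b = 2$, which is precisely where the factor $(1/3)^{n-2}$ naturally appears.
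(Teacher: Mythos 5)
Your plan is genuinely different from the paper's: you propose to estimate the closed form of $M_n$ from Lemma~\ref{Mn} directly, whereas the paper never touches that formula in the proof of Lemma~\ref{Munif} --- it goes back to the double integral \eqref{Mneqn}, bounds the integrand below using $b+u>1+u$, integrates the two resulting pieces via Lemma~\ref{L_n_formula}, and only then uses $(b+1)/(b-1)\geq 3$ to extract the $\left(1-3^{-(n-2)}\right)$ factor. A closed-form attack is not unreasonable in principle, since the stated bound is quite loose (for $n=3$ the left side is about $2.06$ at $b=2$ against a bound of about $0.475$), but as written your accounting does not close, and the decisive estimates are asserted rather than carried out.

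Concretely: (i) the singular logarithms do \emph{not} cancel inside $S_2$ and $S_3$. In $S_2$ the coefficient of $\log(b-1)$ is $-3-(-1)^n\neq 0$, and in $S_3$ one gets $2\log\bigl(\frac{b-1}{b+1}\bigr)\to-\infty$; these terms are only tamed after multiplication by the prefactors, so ``extends continuously to $b=1$'' is false for the brackets themselves and your sign analysis must be redone on the damped products. (ii) The claim that the damped $S_2$ yields exactly $-\bigl(P_{n-2}(1)+\log(3/4)\bigr)/3^{n-2}$ and that its $P_{n-3}$ evaluations ``produce the $P_{n-3}(1)$ summand'' is not borne out: $\bigl(\frac{b-1}{b+1}\bigr)^{n-2}P_{n-3}\bigl(\frac{b+1}{b-1}\bigr)=\sum_{k=1}^{n-3}\frac{1}{k}\bigl(\frac{b-1}{b+1}\bigr)^{n-2-k}$, whose largest term is $O\bigl(\frac{1}{n-3}\bigr)$, nowhere near $P_{n-3}(1)$; in the paper that summand arises from a limit of $-\log(1-u)/(b-u)^{n-2}$ in the $u$-integration, an object your decomposition never sees. (iii) The assertion that $S_3,S_4$ are ``absorbable into the $3^{-(n-2)}$ correction'' fails numerically: for $n=3$, $b=2$ the damped $S_3$ contributes $\tfrac14\bigl(L_0(4/3)-L_0(4)\bigr)=-\tfrac12\log 3\approx-0.55$, while the correction term $\tfrac13\bigl(P_1(1)+\log(3/4)\bigr)\approx 0.24$. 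The inequality survives only because of slack elsewhere (e.g.\ $S_1(2)\approx 2.81$), so you would need a genuinely global bookkeeping, uniform in $n$ and $b\in(1,2]$ and split by parity, rather than the term-by-term matching you describe; that bookkeeping is the actual content of the lemma and is missing. You also give no argument that the minimum occurs at $b=2$ (it does not: the expression is not minimized there, and the bound is not attained anywhere).
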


\begin{proof}
From  equation \ref{Mneqn} for $M_n$ we have that
 $$M_n(b)  = \int^{1}_{-1} du\int^{\infty}_{b}  \frac{\log\left(\frac{(v^2-1)(b^2-u^2)}{(v^2-b^2)(1-u^2)}\right)}{(v-u)^{n}}dv \geq \int^{1}_{-1} du\int^{\infty}_{b}  \frac{\log\left(\frac{(v^2-1)(b-u)}{(v^2-b^2)(1-u)}\right)}{(v-u)^{n}}dv$$
 as $b+u > 1+u$.
 We split the interior integral on the right into two integrals.
 $$I_1=-\int^{\infty}_{b}  \frac{\log\left(v-b\right)}{(v-u)^{n}}dv, \qquad I_2 = \int^{\infty}_{b}  \frac{\log\left(\frac{(v^2-1)(b-u)}{(v+b)(1-u)}\right)}{(v-u)^{n}}dv.$$
 By Lemma \ref{L_n_formula} we have
 $$I_1= \left.\frac{1}{n-1}\left(\frac{\log(v-b)}{(v-u)^{n-1}}-\frac{L_{n-2}\left(\frac{b-u}{v-u}\right)}{(b-u)^{n-1}}\right)\right|_b^\infty = \frac{1}{n-1}\lim_{v\rightarrow b^+}\left(\frac{L_{n-2}\left(\frac{b-u}{v-u}\right)}{(b-u)^{n-1}}-  \frac{\log(v-b)}{(v-u)^{n-1}}\right)$$
 By the limit in Lemma \ref{L_n_formula} we have
 
 $$I_1= \frac{1}{n-1}\left(\frac{P_{n-2}(1)-\log(b-u)}{(b-u)^{n-1}}\right).$$
Integrating by parts we get
$$I_2 = \left.-\frac{1}{n-1}\left( \frac{\log\left(\frac{(v^2-1)(b-u)}{(v+b)(1-u)}\right)}{(v-u)^{n-1}}\right|_b^\infty +\int_b^\infty\frac{dv}{(v-u)^{n-1}}\left(\frac{1}{v-1}+\frac{1}{v+1}-\frac{1}{v+b}\right)\right).$$
As $v+b > v+1$ we have
$$\frac{1}{v-1}+\frac{1}{v+1}-\frac{1}{v+b} \geq \frac{1}{v-1} > 0.$$
Therefore
$$I_2 \geq \left.-\frac{1}{n-1}\left( \frac{\log\left(\frac{(v^2-1)(b-u)}{(v+b)(1-u)}\right)}{(v-u)^{n-1}}\right|_b^\infty \right) = \frac{1}{n-1}\left( \frac{\log\left(\frac{(b^2-1)(b-u)}{(2b)(1-u)}\right)}{(b-u)^{n-1}} \right).$$
Therefore combining we have
$$M_n(b) \geq \frac{1}{n-1}\left( \int_{-1}^{1} \frac{\log\left(\frac{(b^2-1)}{2b(1-u)}\right)+P_{n-2}(1)}{(b-u)^{n-1}}du\right) =   J_1(b) + J_2(b),$$
where
$$J_1(b) = \frac{1}{n-1}\left( \int_{-1}^{1} \frac{\log\left(\frac{(b^2-1)}{2b}\right)+P_{n-2}(1)}{(b-u)^{n-1}}du\right),$$
$$J_2(b) =  \frac{1}{n-1}\left(\int_{-1}^{1} \frac{-\log(1-u)}{(b-u)^{n-1}}du\right).$$
By integration we have
$$J_1(b) =   \frac{\log\left(\frac{(b^2-1)}{2b}\right)+P_{n-2}(1)}{(n-1)(n-2)}\left(\frac{1}{(b-1)^{n-2}} - \frac{1}{(b+1)^{n-2}}\right).$$
Using Lemma \ref{L_n_formula} we get
$$J_2(b) = \left.\frac{1}{(n-1)(n-2)}\left(\frac{-\log(1-u)}{(b-u)^{n-2}} 
+\frac{L_{n-3}(\frac{b-1}{b-u})}{(b-1)^{n-2}} \right)\right|_{-1}^{1}.$$
Therefore,
$$(n-1)(n-2)J_2(b) = \frac{-L_{n-3}(\frac{b-1}{b+1})}{(b-1)^{n-2}}+  \frac{\log(2)}{(b+1)^{n-2}}+  \lim_{u\rightarrow 1-} \left( -\frac{\log(1-u)}{(b-u)^{n-2}}+ \frac{L_{n-3}(\frac{b-1}{b-u})}{(b-1)^{n-2}}\right).$$
By Lemma  \ref{L_n_formula}, we have the limit
$$ \lim_{u\rightarrow 1-} \left( -\frac{\log(1-u)}{(b-u)^{n-2}}+ \frac{L_{n-3}(\frac{b-1}{b-u})}{(b-1)^{n-2}}\right) = 
\frac{P_{n-3}(1) -\log(b-1)}{(b-1)^{n-2}}.$$
Combining, we get
$$(n-1)(n-2)J_2(b) = \left(\frac{-\log(b-1)+ P_{n-3}(1)-L_{n-3}(\frac{b-1}{b+1})}{(b-1)^{n-2}}\right)+ \left(\frac{\log(2)}{(b+1)^{n-2}}\right).$$
Thus
$$(n-1)(n-2)M_n(b) \geq  \frac{\log\left(\frac{b+1}{2b}\right)+P_{n-2}(1)+P_{n-3}(1) -L_{n-3}(\frac{b-1}{b+1})}{(b-1)^{n-2}} 
+ \frac{\log\left(\frac{4b}{b^2-1}\right)-P_{n-2}(1)}{(b+1)^{n-2}}.$$
For $b \in (1,2]$, we have
$$\log\left(\frac{b+1}{2b}\right) + P_{n-2}(1) \geq \log\left(\frac{3}{4}\right) +1 > 0 \quad \text{and} \quad -L_{n-3}\left(\frac{b-1}{b+1}\right) > 0,$$
giving
$$(n-1)(n-2)M_n(b) \geq  \frac{P_{n-3}(1)}{(b-1)^{n-2}} 
+ \frac{\log\left(\frac{b+1}{2b}\right) + P_{n-2}(1)}{(b-1)^{n-2}}+ \frac{\log\left(\frac{4b}{b^2-1}\right)-P_{n-2}(1)}{(b+1)^{n-2}}.$$
As $(b+1)/(b-1) \geq 3$ on $(1,2]$, we have

$$\frac{\log\left(\frac{b+1}{2b}\right) + P_{n-2}(1)}{(b-1)^{n-2}} = \left(1-\frac{1}{3^{n-2}}\right)\left(\frac{\log\left(\frac{b+1}{2b}\right) + P_{n-2}(1)}{(b-1)^{n-2}} \right) + \frac{1}{3^{n-2}}\left(\frac{\log\left(\frac{b+1}{2b}\right) + P_{n-2}(1)}{(b-1)^{n-2}}\right)$$
$$\geq \left(1-\frac{1}{3^{n-2}}\right)\left(\frac{\log\left(\frac{b+1}{2b}\right) + P_{n-2}(1)}{(b-1)^{n-2}}\right) + \frac{\log\left(\frac{b+1}{2b}\right) + P_{n-2}(1)}{(b+1)^{n-2}}.$$
Therefore,
$$(n-1)(n-2)M_n(b) \geq  \frac{P_{n-3}(1)+\left(1-\frac{1}{3^{n-2}}\right)\left(\log\left(\frac{b+1}{2b}\right) + P_{n-2}(1)\right)}{(b-1)^{n-2}} $$
$$+ \frac{\log\left(\frac{b+1}{2b}\right) + P_{n-2}(1)+ \log\left(\frac{4b}{b^2-1}\right)-P_{n-2}(1)}{(b+1)^{n-2}}.$$
This gives
$$ (n-1)(n-2)M_n(b) \geq  \frac{P_{n-3}(1)+\left(1-\frac{1}{3^{n-2}}\right)\left(\log\left(\frac{b+1}{2b}\right) + P_{n-2}(1)\right)}{(b-1)^{n-2}} + \frac{\log\left(\frac{2}{b-1}\right) }{(b+1)^{n-2}}.$$
Finally,
$$(n-1)(n-2)M_n(b) \geq   \frac{P_{n-3}(1)+\left(1-\frac{1}{3^{n-2}}\right)\left(\log\left(\frac{3}{4}\right) + P_{n-2}(1)\right)} {(b-1)^{n-2}} \geq \frac{0.474879}{(b-1)^{n-2}}.$$
\end{proof}

With this bound in hand, we now find a lower bound for $F_n(x)$ by integration.

\begin{lemma}
For  $l \leq \frac{1}{2} \log\left(\frac{5}{2}\right) $, we have 
$$F_n(l) \geq \frac{K_n}{(e^{l}-1)^{n-2}},$$
where 
$$K_n = \frac{(P_{n-3}(1)+\left(1-\frac{1}{3^{n-2}}\right)\left(P_{n-2}(1)+\log\left(\frac{3}{4}\right)\right))2^{n-2}V_{n-2}V_{n-3}\Gamma(\frac{n}{2})^2}{(n-2)^2V_{n-1}\Gamma(n)}.$$
\label{Fb}
\end{lemma}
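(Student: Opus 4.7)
The plan is to insert the pointwise bound from Lemma~\ref{Munif} into the integral formula~\eqref{Fn}. Setting $b(r):=\sqrt{(e^{2l}-r^2)/(1-r^2)}$, a quick derivative shows $b(r)^2$ is strictly increasing in $r$, with $b(1/\sqrt{2})^2 = 2e^{2l}-1$. The hypothesis $l\le\frac{1}{2}\log(5/2)$ is exactly what is needed to guarantee $b(r)\in(1,2]$ throughout $r\in[0,1/\sqrt{2}]$, which is the window on which Lemma~\ref{Munif} applies. Since the integrand of \eqref{Fn} is nonnegative, I will discard the contribution from $r>1/\sqrt{2}$ and work only on $[0,1/\sqrt{2}]$ for the rest of the argument.

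Next I translate the $M_n$ bound into one in the variable $r$. The elementary identity $(b-1)(b+1)=(e^{2l}-1)/(1-r^2)$ together with the trivial lower bound $b(r)\ge e^l$ yields $b(r)-1\le (e^l-1)/(1-r^2)$, hence
\[
M_n(b(r))\;\ge\;\frac{A_n(1-r^2)^{n-2}}{(n-1)(n-2)(e^l-1)^{n-2}},
\]
where $A_n$ is the numerator of the bound in Lemma~\ref{Munif}. Substituting into \eqref{Fn} and combining the $(1-r^2)$ powers $(1-r^2)^{-(n-2)/2}\cdot(1-r^2)^{n-2}=(1-r^2)^{(n-2)/2}$ gives
\[
F_n(l)\;\ge\;\frac{2^{n-1}A_n V_{n-2}V_{n-3}}{(n-1)(n-2)V_{n-1}(e^l-1)^{n-2}}\int_0^{1/\sqrt{2}} r^{n-3}(1-r^2)^{(n-2)/2}\,dr.
\]

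The main obstacle is estimating the remaining integral, which is essentially an incomplete Beta function. Under $s=r^2$ it becomes $\frac{1}{2}\int_0^{1/2}s^{(n-4)/2}(1-s)^{(n-2)/2}\,ds$. I claim this is at least one quarter of the full Beta function $B((n-2)/2,n/2)$. To see this, apply $t=1-s$ to the tail $\int_{1/2}^{1}$ and subtract from the head $\int_0^{1/2}$; the difference is
\[
\int_0^{1/2}\bigl[s(1-s)\bigr]^{(n-4)/2}(1-2s)\,ds\;\ge\;0,
\]
so the head is at least half of the full Beta integral. Finally, using $\Gamma((n-2)/2)=2\Gamma(n/2)/(n-2)$ and $\Gamma(n-1)=\Gamma(n)/(n-1)$ to rewrite $B((n-2)/2,n/2)=2(n-1)\Gamma(n/2)^2/((n-2)\Gamma(n))$, the factors of $(n-1)$ cancel, an extra $(n-2)$ migrates into the denominator to produce $(n-2)^2$, and the powers of $2$ combine as $2^{n-1}\cdot(2/4)=2^{n-2}$, yielding exactly the constant $K_n$ stated in the lemma.
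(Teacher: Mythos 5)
Your proof is correct and follows essentially the same route as the paper: insert the bound of Lemma~\ref{Munif} into \eqref{Fn} on the range of $r$ where $b(r)\le 2$, convert the resulting integral into an incomplete Beta function, and bound it below by half of the complete Beta function via the head-versus-tail symmetry argument. The only cosmetic differences are that you truncate directly at $r=1/\sqrt{2}$ rather than at $r(a)=\sqrt{(4-e^{2l})/3}$ and obtain the $(1-r^2)$ power from $(b-1)(b+1)=(e^{2l}-1)/(1-r^2)$ with $b\ge e^l$ instead of rationalizing $\sqrt{a^2-r^2}-\sqrt{1-r^2}$; both yield the identical constant $K_n$.
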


\begin{proof}
We let $a = e^l$. Then by Lemma \ref{Munif} above we have 
$$ M_n\left(\sqrt{\frac{a^2-r^2}{1-r^2}}\right) \geq \frac{A_n}{\left(\sqrt{\frac{a^2-r^2}{1-r^2}}-1\right)^{n-2}}  \qquad \mbox{for } \left(\sqrt{\frac{a^2-r^2}{1-r^2}}\right) \leq 2,$$
where 
$$A_n = \frac{P_{n-3}(1)+\left(1-\frac{1}{3^{n-2}}\right)\left(P_{n-2}(1)+\log\left(3/4\right)\right)}{(n-1)(n-2)}.$$ 
Solving this, for $r < \sqrt{(4-a^2)/3} = r(a)$ we obtain
$$F_n(l) \geq \frac{2^{n-1}V_{n-2}V_{n-3}}{V_{n-1}}\int_0^{r(a)} \frac{r^{n-3}}{\left(\sqrt{1-r^2}\right)^{n-2}}.\frac{A_n}{\left(\sqrt{\frac{a^2-r^2}{1-r^2}}-1\right)^{n-2}}
dr.$$
Simplifying we get
$$F_n(l) \geq \frac{2^{n-1}A_nV_{n-2}V_{n-3}}{V_{n-1}}\int_0^{r(a)} \frac{r^{n-3}}{\left(\sqrt{a^2-r^2}-\sqrt{1-r^2}\right)^{n-2}}
dr.$$
$$= \frac{2^{n-1}A_nV_{n-2}V_{n-3}}{V_{n-1}}\int_0^{r(a)} r^{n-3}\left(\frac{\sqrt{a^2-r^2}+\sqrt{1-r^2}}{a^2-1}\right)^{n-2}
dr.$$
As $\sqrt{a^2-r^2}/\sqrt{1-r^2} \geq a$, then  $\sqrt{a^2-r^2}+\sqrt{1-r^2}\geq (a+1)\sqrt{1-r^2}$, giving
$$F_n(l) \geq \frac{2^{n-1}A_nV_{n-2}V_{n-3}}{(a-1)^{n-2}V_{n-1}}\int_0^{r(a)} r^{n-3}\left(\sqrt{1-r^2}\right)^{n-2}
dr.$$
Therefore,
$$F_n(l) \geq \frac{2^{n-1}A_nV_{n-2}V_{n-3}}{(a-1)^{n-2}V_{n-1}}\int_0^{r(a)} r^{n-3}(1-r^2)^{n/2-1}
dr.$$
We change the variable to $t = r^2$ to get
$$F_n(l) \geq \frac{2^{n-2}A_nV_{n-2}V_{n-3}}{(a-1)^{n-2}V_{n-1}}\int_0^{r(a)^2} t^{n/2-2}(1-t)^{n/2-1}dt.$$
The Beta function $B(a,b)$ and the incomplete Beta function $B(x : a,b)$ are defined by
$$B(a,b) = \int_0^{1} t^{a-1}(1-t)^{b-1}dt\qquad B(x: a,b) = \int_0^{x} t^{a-1}(1-t)^{b-1}dt.$$
Therefore,
 $$ F_n(l) \geq \frac{2^{n-2}A_nV_{n-2}V_{n-3}}{(a-1)^{n-2}V_{n-1}} B\left(r(a)^2:\frac n 2-1,\frac n 2\right).$$
We note that  
$$B(a-1,a) = B(1/2: a-1,a) +B(1/2: a, a-1).$$ 
On $[0,1/2]$, as $t < 1-t$, we have $ t^{a-1}(1-t)^{a-2} \leq t^{a-2}(1-t)^{a-1}$ giving
 $$B(1/2: a-1,a) \geq B(1/2: a, a-1).$$ 
 Thus 
 $B(1/2: a-1,a) \geq B(a-1,a)/2$.

Therefore, if we let $r(a)^2 \geq 1/2$, then 
$$B\left(r(a)^2 : \frac n 2-1,\frac n 2\right) \geq  \frac{1}{2} B\left(\frac{n}{2}-1,\frac{n}{2}\right) = \frac{\Gamma(n/2-1)\Gamma(n/2)}{2\Gamma(n-1)} =\left(\frac{n-1}{n-2}\right) \frac{\Gamma(n/2)^2}{\Gamma(n)}  .$$
For $r(a)^2 \geq  1/2$ we require $a \leq \sqrt{5/2}$. Therefore, for $l \leq \frac{1}{2} \log\left(\frac{5}{2}\right)$ we have
$$ F(l) \geq  \left(\frac{\left(P_{n-3}(1)+\left(1-\frac{1}{3^{n-2}}\right)\left(P_{n-2}(1)+\log\left(\frac{3}{4}\right)\right) \right) 2^{n-2}V_{n-2}V_{n-3}\Gamma(\frac{n}{2})^2}{(n-2)^2V_{n-1}\Gamma(n)}\right)\frac{1}{(e^l-1)^{n-2}}.$$
\end{proof}

\section{Systole and volume estimates}
We now use the bound for $F(l)$ to obtain a lower bound on the length of the shortest orthogeodesic and to obtain lower bounds on volume in terms of the area of the boundary. We first will need the following elementary calculation.

\begin{lemma}
The constants $K_n$ from Lemma~\ref{Fb} satisfy
 $$K_n \geq 
\left(\frac{2\pi e}{n-1}\right)^{\frac{n-1}{2}}\left(\frac{3(P_{n-3}(1)+\left(1-\frac{1}{3^{n-2}}\right)\left(P_{n-2}(1)+\log\left(\frac{3}{4}\right)\right))}{2^{\frac{3}{2}}e^{\frac52}(n-2)} \right).$$
\label{Kn}
\end{lemma}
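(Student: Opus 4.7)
My plan is to first simplify $K_n$ to a clean expression in Gamma functions, and then estimate the resulting Gamma ratio via Stirling's formula. Using the standard formula $V_k = 2\pi^{(k+1)/2}/\Gamma((k+1)/2)$ to replace every sphere volume in the definition of $K_n$, together with Legendre's duplication formula $\Gamma(n)=2^{n-1}\Gamma(n/2)\Gamma((n+1)/2)/\sqrt\pi$ and the elementary recurrences $\Gamma((n+1)/2)=\frac{n-1}{2}\Gamma((n-1)/2)$ and $\Gamma((n-2)/2)=\frac{2}{n-2}\Gamma(n/2)$, I expect the whole expression to collapse to
$$K_n \;=\; \frac{A\,\pi^{(n-2)/2}\,\Gamma(n/2)}{(n-1)(n-2)\,\Gamma((n-1)/2)^{2}},$$
where $A$ denotes the numerical factor $P_{n-3}(1)+(1-3^{-(n-2)})(P_{n-2}(1)+\log(3/4))$ that appears on both sides of the claim.

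With $K_n$ in this form, cancelling $A$ and the factor $1/(n-2)$ from both sides reduces the lemma to showing
$$\frac{\Gamma(n/2)}{\Gamma((n-1)/2)^{2}} \;\geq\; \frac{3\sqrt{\pi}}{2^{3/2}e^{5/2}}\cdot\frac{(2e)^{(n-1)/2}}{(n-1)^{(n-3)/2}}.$$
Next I would apply the Stirling bounds $\Gamma(x)\ge\sqrt{2\pi}\,x^{x-1/2}e^{-x}$ on the numerator and $\Gamma(x)\le\sqrt{2\pi}\,x^{x-1/2}e^{-x+1/(12x)}$ on the denominator. After the powers of $\pi$, $2$, and $(n-1)$ line up with the target constant, the remaining inequality simplifies to
$$\left(\frac{n}{n-1}\right)^{(n-1)/2} e^{\,2-\frac{1}{3(n-1)}} \;\geq\; 3\pi,$$
whose left-hand side is monotonically increasing in $n$ and tends to $e^{5/2}\approx 12.18>3\pi\approx 9.42$.

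The main technical obstacle is that the naive Stirling estimate is just barely insufficient at the base case $n=3$: one computes $(3/2)\,e^{11/6}\approx 9.38$, narrowly below $3\pi$. I plan to handle this either by (i) verifying the claim directly for small $n$ (say $n\in\{3,4,5\}$) using the exact half-integer values $\Gamma(1)=1$, $\Gamma(3/2)=\sqrt{\pi}/2$, $\Gamma(2)=1$, $\Gamma(5/2)=3\sqrt{\pi}/4$, and then invoking the Stirling argument only for $n\ge 6$; or (ii) replacing the simple Stirling estimate by Robbins' sharper bound $\Gamma(x+1)\ge\sqrt{2\pi x}(x/e)^x e^{1/(12x+1)}$ throughout, which should provide enough slack to cover all $n\ge 3$ uniformly.
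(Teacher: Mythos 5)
Your proposal follows essentially the same route as the paper: rewrite $K_n$ via the sphere--volume formula and Legendre duplication as the Gamma ratio $A\,\pi^{(n-2)/2}\Gamma(n/2)/\bigl((n-1)(n-2)\Gamma(\frac{n-1}{2})^2\bigr)$ (your simplification is correct and agrees with the paper's equivalent form $\frac{(n-1)\pi^{(n-2)/2}\Gamma(\frac{n+2}{2})}{2(n-2)n\Gamma(\frac{n+1}{2})^2}$), then estimate that ratio by Stirling. The one substantive difference is the choice of Stirling bound for the denominator. You use $\Gamma(x)\le\sqrt{2\pi}\,x^{x-1/2}e^{-x+1/(12x)}$, which is asymptotically sharp but loses just enough at $x=1$ that your reduced inequality $\left(\frac{n}{n-1}\right)^{(n-1)/2}e^{2-\frac{1}{3(n-1)}}\ge 3\pi$ fails at $n=3$ (about $9.38$ versus $9.42$), forcing a patch; your option (i) does close this, since at $n=3$ the claim reduces to the exact inequality $\frac{1}{4}\ge 3/(2e)^{3/2}\approx 0.2367$ and your Stirling step already succeeds for all $n\ge 4$ by the monotonicity you note. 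The paper instead bounds the denominator by $\Gamma(x+1)\le e\,x^{x+1/2}e^{-x}$, which is an \emph{equality} at $x=1$, i.e.\ precisely at the tight case $n=3$; it then extracts the factor $\left(\frac{n}{n-1}\right)^{(n-1)/2}\ge\frac{3}{2}$ by monotonicity and obtains the stated constant uniformly for all $n\ge 3$ with no case split. So your argument is sound once option (i) is carried out (option (ii) via Robbins would also close the $n=3$ gap, but is fussier at half-integer arguments); the paper's choice of the cruder-looking Stirling constant is exactly what makes the estimate uniform.
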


\begin{proof}
The volumes of spheres are given by 
$$V_n = \frac{(n+1)\pi^{\frac{n+1}{2}}}{\Gamma(\frac{n+3}{2})}.$$
We have Legendre's replacement formula 
$$\Gamma(z)\Gamma(z+1/2) = 2^{1-2z}\sqrt{\pi}\Gamma(2z).$$
Thus
$$\frac{2^{n-2} V_{n-2}V_{n-3}\Gamma(\frac{n}{2})^2}{(n-2)^2V_{n-1}\Gamma(n)}  = \frac{(n-1)2^{n-2}\pi^{\frac{n-3}{2}}\Gamma(\frac{n+2}{2})\Gamma(\frac{n}{2})}{(n-2)n.\Gamma(\frac{n+1}{2})\Gamma(n)}  =\frac{(n-1)\pi^{\frac{n-2}{2}}\Gamma(\frac{n+2}{2})}{2(n-2)n\Gamma(\frac{n+1}{2})^2} .$$
By using the upper and lower bounds  for the Gamma function
$$\sqrt{2\pi} x^{x+1/2}e^{-x} \leq\Gamma(x+1)  \leq e.x^{x+1/2}e^{-x},$$
we obtain
$$\frac{(n-1)\pi^{\frac{n-2}{2}}\Gamma(\frac{n+2}{2})}{2(n-2)n\Gamma(\frac{n+1}{2})^2}  \geq \frac{(n-1)\pi^{\frac{n-2}{2}}(\sqrt{2\pi}\left(\frac{n}{2}\right)^{\frac{n+1}{2}}e^{-\frac{n}{2}})}{2(n-2)n(e^2\left(\frac{n-1}{2}\right)^ne^{-(n-1)})} = \frac{2^{\frac{n}{2}-1}\pi^{\frac{n-1}{2}}n^{\frac{n-1}{2}}e^{\frac{n}{2}-3}}{(n-2)\left(n-1\right)^{n-1}}.$$
Thus
$$K_n \geq 
 \left(\frac{2\pi n e}{(n-1)^2}\right)^{\frac{n-1}{2}}\left(\frac{P_{n-3}(1)+\left(1-\frac{1}{3^{n-2}}\right)\left(P_{n-2}(1)+\log\left(\frac{3}{4}\right)\right)}{(n-2)e^{\frac52}\sqrt{2}} \right).$$
Finally as $(\frac{n}{n-1})^{\frac{n-1}{2}}$ is monotonically increasing, then as $n \geq 3$ we have  $(\frac{n}{n-1})^{\frac{n-1}{2}} \geq \frac{3}{2}$
 $$K_n \geq 
\left(\frac{2\pi e}{n-1}\right)^{\frac{n-1}{2}}\left(\frac{3(P_{n-3}(1)+\left(1-\frac{1}{3^{n-2}}\right)\left(P_{n-2}(1)+\log\left(\frac{3}{4}\right)\right))}{2(n-2)e^{\frac52}\sqrt{2}} \right).$$
\end{proof}

We now can prove the bound in Theorem~\ref{thm} which we restate below.

{
\renewcommand{\thetheorem}{\ref{thm}}
\begin{theorem}
Let $M$ be a compact hyperbolic $n$-manifold  with totally geodesic boundary. Then either  $\orth(M) > \frac{1}{2}\log\left(\frac{5}{2}\right)$ or satisfies
$$e^{\orth(M)}-1 \geq g_n \sqrt{\frac{2\pi e}{n-1}}(\Vol(M))^{-\frac{1}{n-2}},$$
where $g_n$ is an explicit monotonically increasing function tending to $1$. 
\end{theorem}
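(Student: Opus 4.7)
The plan is to combine the Bridgeman--Kahn identity (Theorem~\ref{BKid}) with the explicit lower bound on $F_n$ from Lemma~\ref{Fb} and the bound on $K_n$ from Lemma~\ref{Kn}. Write $\ell_0 = \orth(M)$; since the conclusion is vacuous when $\ell_0 \geq \tfrac{1}{2}\log(5/2)$, we may assume $\ell_0 \leq \tfrac{1}{2}\log(5/2)$. Because $F_n$ is positive and the orthospectrum $\Lambda_M$ contains the length $\ell_0$, Theorem~\ref{BKid} immediately yields the coarse but decisive inequality
$$\Vol(M) \;=\; \sum_{l \in \Lambda_M} F_n(l) \;\geq\; F_n(\ell_0).$$

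Now I apply Lemma~\ref{Fb} (whose hypothesis $\ell_0\le\tfrac{1}{2}\log(5/2)$ is precisely what forces the dichotomy in the statement) to obtain
$$\Vol(M) \;\geq\; \frac{K_n}{(e^{\ell_0}-1)^{n-2}},$$
so that $e^{\orth(M)}-1 \geq K_n^{1/(n-2)}\,\Vol(M)^{-1/(n-2)}$. To match the form in the theorem, I need to extract a single factor of $\sqrt{2\pi e/(n-1)}$ from $K_n^{1/(n-2)}$. Since Lemma~\ref{Kn} bounds $K_n$ below by $(2\pi e/(n-1))^{(n-1)/2}$ times an explicit expression, and since $(n-1)/(2(n-2)) = 1/2 + 1/(2(n-2))$, taking $(n-2)$-th roots pulls out exactly one copy of $\sqrt{2\pi e/(n-1)}$ and leaves
$$g_n \;=\; \left(\frac{2\pi e}{n-1}\right)^{\frac{1}{2(n-2)}}\left(\frac{3\bigl(P_{n-3}(1)+(1-3^{-(n-2)})(P_{n-2}(1)+\log(3/4))\bigr)}{2^{3/2}e^{5/2}(n-2)}\right)^{\frac{1}{n-2}}.$$

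It remains to verify the two asserted properties of $g_n$. The limit $g_n \to 1$ is the easier of the two: the first factor has exponent $1/(2(n-2))$ tending to $0$ with a bounded base, while the base of the second factor grows only logarithmically in $n$ (via the harmonic number $P_{n-2}(1)$) and its exponent $1/(n-2)$ also tends to $0$, so both factors tend to $1$. Monotonicity is the step I expect to be most delicate; the natural route is to pass to $\log g_n$ and verify directly that $\log g_{n+1}-\log g_n \geq 0$, which reduces to an elementary but somewhat fiddly inequality comparing the harmonic-number growth in the numerator against the $(n-2)$-th-root dilution. The explicitly tabulated values $g_3, g_4, g_5, g_6$ given in the introduction handle the small cases numerically, so the main work is to produce a uniform monotone estimate on the log-derivative that takes over for large $n$.
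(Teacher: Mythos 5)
Your proposal is correct and follows essentially the same route as the paper: $\Vol(M)\ge F_n(\orth(M))$ via Theorem~\ref{BKid}, then Lemma~\ref{Fb} and Lemma~\ref{Kn}, with one factor of $\sqrt{2\pi e/(n-1)}$ extracted from $K_n^{1/(n-2)}$ exactly as you describe, and your expression for $g_n$ is algebraically identical to the paper's formula \eqref{eq g_n}. The remaining properties of $g_n$ (monotonicity and the limit $1$) are merely asserted in the paper as well, so your sketch of how to check them goes, if anything, slightly beyond what the authors record.
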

\addtocounter{theorem}{-1}
}

\begin{proof} 
Let $L =   \orth(M)$. If $L \leq  \frac{1}{2}\log\left(\frac{5}{2}\right)$, then by Lemma \ref{Fb},
$$\Vol(M) \geq F_n(L) \geq \frac{K_n}{(e^L-1)^{n-2}}.$$
Solving the latter we have
$$e^{L}-1 \geq \left(\frac{K_n}{\Vol(M)}\right)^{\frac{1}{n-2}},$$
which gives
$$e^{L}-1  \geq  K_n^{\frac{1}{n-2}} \Vol(M)^{-\frac{1}{n-2}}.$$
Therefore by Lemma \ref{Kn}, we have
$$e^{L}-1  \geq  g_n \sqrt{\frac{2\pi e}{n-1}}(\Vol(M))^{-\frac{1}{n-2}},$$
where
\begin{equation}\label{eq g_n}
g_n = \left(\frac{3\sqrt{\pi}(P_{n-3}(1)+\left(1-\frac{1}{3^{n-2}}\right)\left(P_{n-2}(1)+\log\left(\frac{3}{4}\right)\right))}{2(n-2)(n-1)^{\frac{1}{2}}e^2}\right)^{\frac{1}{n-2}}.
\end{equation}
\end{proof}

We now obtain a lower bound on the volume in terms of the boundary area. We will need an auxiliary function $S_n$ given by $$S_n(x) =\int_0^x \cosh^{n-1}(r)dr.$$

We prove Theorem~\ref{thm2}, which we first restate.

{
\renewcommand{\thetheorem}{\ref{thm2}}
\begin{theorem}
Let $M$ be a hyperbolic manifold with totally geodesic boundary. Then either
$$\Vol(M) \geq  \frac{1}{4}\log\left(\frac{5}{2}\right)\Vol(\partial M)$$ 
or 
$$ \Vol(M) \geq \frac{h_n}{3}\sqrt{\frac{2\pi e}{n-1}} \Vol(\partial M)^{\frac{n-2}{n-1}}.$$
where $h_n$ is an explicit monotonically increasing function tending to $1$.
\end{theorem}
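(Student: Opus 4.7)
The plan is to combine two independent lower bounds for $\Vol(M)$ and to case-split on the length $L:=\orth(M)$ of the shortest orthogeodesic. The first bound is the standard collar estimate: because an overlap of the tubular neighborhood of $\partial M$ of width $w$ forces an orthogeodesic of length at most $2w$, the open $w$-neighborhood embeds whenever $w\leq L/2$; and since the hyperbolic volume element in Fermi coordinates about a totally geodesic hypersurface is $\cosh^{n-1}(r)\,dr$ times the boundary area element, this yields
\[
\Vol(M)\ \geq\ S_n(L/2)\cdot\Vol(\partial M).
\]
The second bound comes from the Bridgeman--Kahn identity together with Lemma~\ref{Fb}: since $F_n$ is non-negative we have $\Vol(M)\geq F_n(L)$, and Lemma~\ref{Fb} gives $\Vol(M)\geq K_n/(e^L-1)^{n-2}$ whenever $L\leq\tfrac{1}{2}\log(5/2)$.

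When $L\geq\tfrac{1}{2}\log(5/2)$, the trivial bound $\cosh\geq 1$ produces $S_n(L/2)\geq L/2\geq\tfrac{1}{4}\log(5/2)$, and the collar estimate immediately yields the first alternative. When $L<\tfrac{1}{2}\log(5/2)$, I raise the collar estimate to the $(n-2)$-th power and multiply it against the Lemma~\ref{Fb} bound, obtaining
\[
\Vol(M)^{n-1}\ \geq\ K_n\left(\frac{S_n(L/2)}{e^L-1}\right)^{n-2}\Vol(\partial M)^{n-2}.
\]
The factor $S_n(L/2)/(e^L-1)$ is then bounded below using $S_n(L/2)\geq L/2$ together with an elementary inequality $e^L-1\leq\tfrac{3}{2}L$ on $[0,\tfrac{1}{2}\log(5/2)]$, yielding $S_n(L/2)/(e^L-1)\geq 1/3$. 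Extracting $(n-1)$-th roots, inserting the estimate $K_n^{1/(n-1)}\geq h_n\sqrt{2\pi e/(n-1)}$ read off from Lemma~\ref{Kn}, and noting that $(1/3)^{(n-2)/(n-1)}\geq 1/3$ (since $1/3<1$ and $(n-2)/(n-1)\leq 1$), delivers the second alternative with the expected constant.

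The main obstacle is the choice of the elementary bound on $e^L-1$: the clean factor $1/3$ appearing in the theorem comes precisely from $e^L-1\leq\tfrac{3}{2}L$ on the interval $[0,\tfrac{1}{2}\log(5/2)]$, which one verifies by inspecting $g(L)=\tfrac{3}{2}L+1-e^L$ (it vanishes at $0$, is concave, and is positive at the right endpoint by direct computation). Everything else is a bookkeeping assembly of the collar formula with the previously established Lemmas~\ref{Fb} and~\ref{Kn}.
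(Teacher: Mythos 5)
Your proof is correct, and in the main case it takes a genuinely different route from the paper. Both arguments start from the same two ingredients: the collar bound $\Vol(M)\geq S_n(L/2)\,\Vol(\partial M)\geq \tfrac{L}{2}\Vol(\partial M)$ and the bound $\Vol(M)\geq F_n(L)\geq K_n/(e^L-1)^{n-2}$ from Lemma~\ref{Fb}, valid for $L\leq\tfrac12\log(5/2)$. The paper then introduces the crossing point $l$ of the decreasing function $F_n(x)$ with the increasing function $\Vol(\partial M)\,x/2$ (and a second crossing point $l_0$ for the explicit lower bound), splits on whether $l$ exceeds $\tfrac12\log(5/2)$, and estimates $l_0$ from below via $e^{l_0}-1\leq a\,l_0$ with $a=(\sqrt{5/2}-1)/\log\sqrt{5/2}\approx 1.268$, arriving at $\Vol(M)\geq\tfrac{1}{2a}K_n^{1/(n-1)}\Vol(\partial M)^{(n-2)/(n-1)}$. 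You instead split directly on $L=\orth(M)$ and, in the small-$L$ case, take the weighted product $\Vol(M)^{n-1}\geq(\text{collar bound})^{n-2}\cdot(\text{Lemma~\ref{Fb} bound})$, which cancels the $L$-dependence up to the single factor $\bigl(S_n(L/2)/(e^L-1)\bigr)^{n-2}\geq (1/3)^{n-2}$; your verification of $e^L-1\leq\tfrac32 L$ on $[0,\tfrac12\log(5/2)]$ via concavity and the endpoint value $\sqrt{5/2}-1<\tfrac34\log(5/2)$ is sound, as are the steps $(1/3)^{(n-2)/(n-1)}\geq 1/3$ and $K_n^{1/(n-1)}\geq h_n\sqrt{2\pi e/(n-1)}$ read off from Lemma~\ref{Kn}. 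Your route is shorter, avoids the auxiliary quantities $l,l_0$ entirely, and even yields a marginally better constant $3^{-(n-2)/(n-1)}$ before rounding down to $1/3$; what the paper's crossing-point formulation buys is the intermediate statement $\Vol(M)\geq \Vol(\partial M)\,l/2$ with $l$ determined by $\Vol(\partial M)$ and $n$ alone, which is what gets solved numerically in the three-dimensional discussion of Section~\ref{sec5}.
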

\addtocounter{theorem}{-1}
}

\begin{proof}
Let $L=\orth(M), V = \Vol(M), A=\Vol(\partial M)$. Then by Theorem \ref{BKid},
$$V \geq F_n(L).$$
Further, the totally geodesic boundary $\partial M$ has embedded collar of radius $L/2$. By elementary hyperbolic geometry this embedded collar has volume $A.S_n(L/2)$. Thus
$$V \geq A\cdot S_n\left(\frac{L}{2}\right) \geq A \frac{L}{2}.$$
It follows that $$V \geq \max\left(F_n(L), A\frac{L}{2}\right).$$
As $F_n(x)$ is monotonically decreasing and $Ax/2$ monotonically increasing we have a unique $l   > 0$ satisfying
$$F_n(l) = A\frac{l}{2}.$$
Furthermore, it follows that $V \geq A\frac{l}{2}.$
If $l \geq \frac{1}{2}\log\left(\frac{5}{2}\right)$, then
$$V \geq \frac{1}{4}\log\left(\frac{5}{2}\right) A$$
giving the first inequality of the theorem. 

Now assume that $l \leq \frac{1}{2}\log\left(\frac{5}{2}\right)$. Then by Lemma \ref{Fb}, 
$$V \geq \max\left(\frac{K_n}{(e^l-1)^{n-2}}, A\frac{l}{2}\right).$$
We therefore consider $l_0$, the unique solution of 
$$\frac{K_n}{(e^{l_0}-1)^{n-2}} = A\frac{l_0}{2}.$$
We observe that $l_0 \leq l$ and therefore
we have $l_0 \leq \frac{1}{2}\log\left(\frac{5}{2}\right)$. 
Solving
$$A \frac{l_0}{2} = \frac{K_n}{(e^{l_0}-1)^{n-2}}$$
we obtain
$$(e^{l_0}-1)l_0^{\frac{1}{n-2}} = \left(\frac{2K_n}{A}\right)^{\frac{1}{n-2}}.$$
Thus as $l_0 <  \frac{1}{2}\log\left(\frac{5}{2}\right)$  and $(e^x-1)/x$ is monotonically increasing, we have $e^{l_0}-1 \leq a l_0$ where
 $$a= \frac{\sqrt{5/2}-1}{\log(\sqrt{5/2})} = 1.26846.$$ 
Hence we have
$$a.l_0^{\frac{n-1}{n-2}} \geq \left(\frac{2K_n}{A}\right)^{\frac{1}{n-2}},$$
$$l_0 \geq \frac{1}{a^{\frac{n-2}{n-1}}} \left(\frac{2K_n}{A}\right)^{\frac{1}{n-1}} \geq \frac{1}{a} \left(\frac{K_n}{A}\right)^{\frac{1}{n-1}}.$$
Combining with the inequality for $V$ we get
$$V \geq A\frac{l}{2} \geq A \frac{l_0}{2} \geq \frac{1}{2a}K_n^{\frac{1}{n-1}}A^{\frac{n-2}{n-1}}.$$
Hence by Lemma \ref{Kn} above 
$$V \geq \frac{h_n}{2a}\sqrt{\frac{2\pi e}{n-1}}A^{\frac{n-2}{n-1}},$$
where
\begin{equation}\label{eq h_n}
h_n = \left(\frac{3(P_{n-3}(1)+\left(1-\frac{1}{3^{n-2}}\right)\left(P_{n-2}(1)+\log\left(\frac{3}{4}\right)\right))}{2^{3/2}e^{5/2}(n-2)}\right)^{\frac{1}{n-1}}.
\end{equation}
For $n \geq 3$ it is easy to check that $h_n$ is monotonically increasing to $1$.  Evaluating $a$ we get
$$V \geq \frac{h_n}{2.53692}\sqrt{\frac{2\pi e}{n-1}}A^{\frac{n-2}{n-1}}\geq  \frac{h_n}{3}\sqrt{\frac{2\pi e}{n-1}}A^{\frac{n-2}{n-1}}.$$
\end{proof}

\section{Dimension 3 case}\label{sec5}

 We note that the constants in the main theorems proved for general dimension can be improved in any specific case by analysing $F_n$ individually. We now consider the three dimensional case separately. 
 
 In \cite{MMc}, Masai--McShane proved that the volume identity of Bridgeman--Kahn (see Theorem~\ref{BKid}) is equal to the identity obtained by Calegari (see \cite{DCid}) using a different decomposition. Applying Calegari's formula in dimension three they obtained an elementary closed form for $F_3$ which gives
\begin{equation}
\label{3d}
F_3(x) = 2\pi \left(\frac{x+1}{e^{2x}-1} \right).
\end{equation}
We note that there is a normalization error in \cite{MMc} (by a factor of $4\pi$) and the above formula is the corrected version (see \cite{VY} where the correct version is also stated).

Using the formula of Masai--McShane for $F_3$ we  can give an elementary argument that improves the constants in Theorem \ref{thm} in the case of $n=3$. We would like to thank the referee for this observation.

\begin{prop}
Let $M$ be a compact hyperbolic $3$-manifold with non-empty totally geodesic boundary.  Then either $\orth(M) > 1.25$ or
$$e^{\orth(M)}-1\geq  \frac{\pi}{V(M)}.$$
\end{prop}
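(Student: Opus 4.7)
The plan is to use Theorem~\ref{BKid} together with the closed-form expression \eqref{3d} for $F_3$, and then reduce the claim to an elementary one-variable inequality. Let $L = \orth(M)$. By Theorem~\ref{BKid} and the monotonicity of $F_3$, the shortest orthogeodesic contributes at least $F_3(L)$ to the volume sum, so
$$V(M) \;\geq\; F_3(L) \;=\; \frac{2\pi (L+1)}{e^{2L}-1} \;=\; \frac{2\pi(L+1)}{(e^L-1)(e^L+1)}.$$

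Multiplying both sides by $e^L - 1$ gives
$$V(M)\bigl(e^L-1\bigr) \;\geq\; \frac{2\pi(L+1)}{e^L+1}.$$
The target inequality $e^L - 1 \geq \pi / V(M)$ is equivalent to $V(M)(e^L-1) \geq \pi$. Therefore it suffices to establish
$$\frac{2\pi(L+1)}{e^L+1} \;\geq\; \pi \qquad \Longleftrightarrow \qquad e^L \;\leq\; 2L+1,$$
for $L \leq 1.25$. This is the step where the threshold $1.25$ enters and where all the content of the proposition now sits.

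Set $\varphi(L) = 2L + 1 - e^L$. Then $\varphi(0) = 0$ and $\varphi'(L) = 2 - e^L$, which is positive on $[0, \log 2)$ and negative on $(\log 2, \infty)$, so $\varphi$ is unimodal on $[0,\infty)$ with a unique interior maximum at $L = \log 2$ and then decreases. Hence $\varphi$ is nonnegative on $[0,L^\ast]$ where $L^\ast$ is the unique positive root of $e^L = 2L+1$. The only thing left to verify is that $1.25 \leq L^\ast$, which amounts to checking $\varphi(1.25) \geq 0$: since $e^{1.25} \approx 3.4903 < 3.5 = 2(1.25)+1$, this holds with a small margin.

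The main (and only) potential obstacle is this final numerical check; the threshold $1.25$ must be chosen strictly below $L^\ast \approx 1.2564$, and apart from that the argument is entirely mechanical once the Masai--McShane closed form \eqref{3d} is in hand.
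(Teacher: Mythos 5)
Your proposal is correct and follows essentially the same route as the paper: the paper's key step, the inequality $\frac{x+1}{e^x+1}\geq\frac{1}{2}$ for $0\leq x\leq 1.25$, is exactly your inequality $e^L\leq 2L+1$ rearranged, and the rest is the same application of the Bridgeman--Kahn identity with the Masai--McShane formula for $F_3$. Your only addition is to spell out the calculus and the numerical check $e^{1.25}<3.5$ that the paper leaves as ``elementary calculus,'' which is fine.
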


\begin{proof}
By elementary calculus for $0\leq x \leq 1.25$ we have 
$$\frac{x+1}{e^x+1} \geq \frac{1}{2}.$$
Thus for $\orth(M) \leq 1.25$, equation \eqref{3d} gives
$$V(M) \geq F(\orth(M)) = 2\pi\left(\frac{\orth(M)+1}{e^{\orth(M)}+1}\right)\frac{1}{e^{\orth(M)}-1} \geq \frac{\pi}{e^{\orth(M)}-1}.$$
Thus, if $\orth(M) \leq 1.25$,
$$e^{\orth(M)}-1\geq  \frac{\pi}{V(M)}.$$
\end{proof}

We now compare this with Theorem \ref{thm}. For $n=3$  the theorem states  
that if $\orth(M) \leq \frac{1}{2}\log(5/2)$, then
 $$e^{\orth(M)}-1 \geq \frac{g_3 \sqrt{\pi e}}{V(M)}= \frac{0.353076}{V(M)}.$$

Also in dimension $3$, Miyamoto and Kojima proved that Miyamoto's bound in \cite{Miy94} is optimal and that the lowest volume hyperbolic $3$-manifold with totally geodesic boundary has boundary a genus two surface and volume $6.452$ (see \cite{KM91}). We can compare this optimal bound to the bound obtained using equation \eqref{3d} for $F_3$.  

As in our prior analysis in Theorem \ref{thm2}, we obtain a volume bound by finding the common value of $F_3(x) =  4\pi S_3(x/2)$. Solving numerically we obtain a lower bound of $4.079$ which is comparable to Miyamoto's optimal bound. This was also observed in \cite[Section 7]{BK10} but due to the missing factor in the integral formula for $F_n$ (see the footnote attached to equation \eqref{Fn}), the bound obtained there was given as $2.986$.

\section{Lower bounds for volume of hyperbolic $n$-manifolds with totally geodesic boundary}\label{sec6}

We now consider our bounds in general dimension $n \geq 3$. In even dimensions the generalized Gauss--Bonnet theorem gives
$$\Vol(M) = \frac{|\chi(M)|V_{n}}{2} \geq \frac{V_n}{2} = \frac{(n+1)\pi^{\frac{n+1}{2}}}{\Gamma(\frac{n+3}{2})}.$$
For odd dimensions the best lower bound is by Adeboye and Wei (see \cite{AW}) with
\begin{equation}\label{eq-AW}
\Vol(M) \gtrsim \left(\frac{2}{n}\right)^{\frac{n^2}{2}}.
\end{equation}
Miyamoto \cite{Miy94} showed that for a hyperbolic manifold $M$ with non-empty totally geodesic boundary we have 
$$\Vol(M) \geq \rho_n \Vol(\partial M)$$
for some constants $\rho_n$. In \cite[Lemma~1.4.3 and Table~1.4.5]{KelHab} (see also \cite{Kel95}), Kellerhals showed that $\rho_n$ are monotonically increasing with $\rho_6 = 0.64652$. Thus for $n > 6$ odd we have
 $$\Vol(M) \geq  \frac{\rho_n}{2} \frac{V_{n-1}}{2} \geq 0.32326 V_{n-1}.$$
When applies, this bound is much stronger than \eqref{eq-AW} (applied to the double of $M$).

The key ingredient of Miyamoto's proof is his notion of the \emph{hypersphere packings}. These packings have similar properties to the sphere packings in constant curvature spaces. In his paper Miyamoto proved a hypersphere analogue of the well known B\"or\"oczky's sphere packing theorem which says that any sphere packing of radius $r$ in an $n$-dimensional space of constant curvature has density at most that of $n+1$ mutually touching balls in the regular $n$-simplex of edgelength $2r$ spanned by their centers. Following this line of argument, the constant $\rho_n$ in Miyamoto's volume bound is given by the ratio of the volumes of a certain truncated and regular hyperbolic simplices. These volumes can be further related to the volumes of orthoschemes. In her thesis \cite{KelHab}, Kellerhals was able to explicitly estimate the latter volumes.   

We now show that our results give a new proof of a linear bound for $\Vol(M)$. By Theorem~\ref{thm2}
either
$$\Vol(M) \geq \frac{1}{4}\log\left(\frac{5}{2}\right)\Vol(\partial M)$$
or
$$\Vol(M) \geq \frac{h_n}{3}\sqrt{\frac{2\pi e}{n-1}} (\Vol(\partial M))^{\frac{n-2}{n-1}},$$
where $h_n$ monotonically increases to $1$. The first bound is linear and implies for $n$ odd
$$\Vol(M) \geq \frac{1}{8}\log\left(\frac{5}{2}\right)V_{n-1}.$$
To show that the second bound also gives us a linear lower bound in terms of $V_{n-1}$ we note that by Stirling's approximation
$$V_n= \frac{(n+1)\pi^{\frac{n+1}{2}}}{\Gamma(\frac{n+3}{2})} \leq \frac{1}{\sqrt{2}}\left(\frac{2\pi e}{n+1}\right)^{\frac{n}{2}} \leq  \frac{1}{\sqrt{2}}\left(\frac{2\pi e}{n}\right)^{\frac{n}{2}}.$$

Therefore, 
$$\Vol(M) \geq \frac{h_n}{3}\sqrt{\frac{2\pi e}{n-1}}\left(\frac{V_{n-1}}{2}\right)^{\frac{n-2}{n-1}}  \geq \frac{ h_n}{3} \frac{V_{n-1}}{2} = \frac{h_n}{6}V_{n-1}.$$
Thus for $n$ odd we have
$$\Vol(M) \geq \min\left(\frac{1}{8}\log\left(\frac{5}{2}\right),\frac{h_n}{6}\right) V_{n-1}$$
proving  Theorem~\ref{thm-vol}.

This way we obtain another proof of a lower bound linear in $V_{n-1}$ using different methods. The answers are remarkably similar in spite of the different approaches. To compare, our method  gives a  linear constant tending to  $\frac{1}{8}\log\left(\frac{5}{2}\right) \simeq 0.11453$ and Miyamoto--Kellerhals give a slightly better bound of $0.32326$. It would be interesting to see if there is any deeper relation between the two.

In conclusion let us remark that it is widely believed that these bounds for volumes of hyperbolic manifolds, as well as the Gauss--Bonnet bound in even dimensions, are far from sharp. The sharp bounds are known for \emph{arithmetic} orbifolds, and they imply good bounds for arithmetic manifolds (see \cite{Bel14, BE14}). These bounds grow superexponentially fast with the dimension. It is not known if there exists a hyperbolic $n$-manifold whose volume is less than the minimal volume of an arithmetic $n$-manifold.

\medskip

\end{document}